\xapptocmd\normalsize{%
	\abovedisplayskip=12pt plus 3pt minus 9pt
	\abovedisplayshortskip=0pt plus 3pt
	\belowdisplayskip=12pt plus 3pt minus 9pt
	\belowdisplayshortskip=7pt plus 3pt minus 4pt
}{}{}
\theoremstyle{definition}
\newtheorem{definition}{Definition}[section]
\newtheorem{remark}[definition]{Remark}
\theoremstyle{plain}
\newtheorem{theorem}[definition]{Theorem}
\newtheorem{corollary}[definition]{Corollary}
\newtheorem{lemma}[definition]{Lemma}
\numberwithin{equation}{section}
\title[Starlikeness using special functions and subordination]{\textbf{Starlikeness using special functions and subordination}}
\author[M. Sharma]{Meghna Sharma}
\address{Department of Mathematics, University of Delhi, Delhi--110 007, India}
\email{meghnasharma203@gmail.com}
\author[N.K. Jain]{Naveen Kumar Jain}
\address {Department of Mathematics, Aryabhatta College, Delhi-110021,India}
\email{naveenjain05@gmail.com}
\author[S. Kumar]{Sushil Kumar}
\address {Bharati Vidyapeeth's college of Engineering, Delhi-110063, India}
\email{sushilkumar16n@gmail.com}
\date{}
\keywords{Subordination; Admissibility Conditions; Exponential Function; Hypergeometric Function; Janowski function}
\subjclass[2010]{30C45, 30C80}
\thanks{The first author is supported by Senior Research Fellowship from Council of Scientific and Industrial Research, New Delhi, Ref. No.:1753/(CSIR-UGC NET JUNE, 2018).}
\begin{document}
	\maketitle
	\begin{abstract}
		The association of subordination and special functions is used to find sharp estimates on the parameter $\beta$ such that the analytic function $p(z)$ is subordinate to certain functions having positive real part whenever $p(z)+\beta z p'(z)$ is subordinate to the Janowski function. Further, when the traditional approach of solving higher order differential subordination implications failed, the concept of admissibility is employed to establish certain second and third order differential subordination relations between the analytic function $p$ and the functions associated with right half plane. As a sequel, we demonstrated the starlikeness of various well-known analytic functions as well.
	\end{abstract}
	
	\section{Introduction}
	Let $\mathcal{A}$ be the class of analytic functions $f$ defined on the open unit disc $\mathbb{D}:=\{z \in \mathbb{C}: |z|<1\}$ normalized by the conditions $f(0)=f'(0)-1=0$ and $\mathscr{U}$ denote the collection of $univalent$ functions.
	The analytic function $f_1$ is $subordinate$ to another analytic function $f_2$ if there is a Schwarz function $w$ such that $f_1(z)=(f_2ow)(z)$.
	The theory of subordination developed by Miller and Mocanu \cite{MR0506307} has been used extensively in proving several known results involving analytic functions.
	We denote the class of $starlike$ \cite{MR1343506} functions by $\mathcal{S}^{*}$ which comprises of all those analytic functions $f$ satisfying the relation $\operatorname{Re}(zf'(z)/f(z))>0$ for $z \in \mathbb{D}$.
	Later, Ma-Minda unified various subclasses of $starlike$ and presented a more generalized version associated with the function $\varphi$ as follows:
	\[\mathcal{S}^{*}_{\varphi}:=\left\{f \in \mathcal{A}:\frac{zf'(z)}{f(z)} \prec \varphi(z); z \in \mathbb{D}\right\}.\]
	The function $\varphi \in \mathscr{U}$ is analytic, having positive real part and image of unit disc $\mathbb{D}$ under $\varphi$ is starlike with respect to $\varphi(0)=1$.
	Numerous authors have introduced and studied several prominent subclasses of $starlike$ functions for the appropriate choices of the function $\varphi$.
	For instance,
	\begin{enumerate}[(1)]
	\item $\mathcal{S}^{*}[A,B]:=\mathcal{S}^{*}((1+Az)/(1+Bz))$, for $-1 \leq B < A \leq 1$, is the subclass of $Janowski$ \cite{MR0267103} starlike functions.
	\item $S^{*}_{e}:=S^{*}(e^{z})$ is the subclass associated with $exponential$ \cite{MR3394060} function $e^z$.
	\item $\mathcal{S}^{*}_{R}:=\mathcal{S}^{*}(\phi_{0})$, where
	$\phi_{0}(z):=1+(z/k)((k+z)/(k-z)), k=\sqrt{2}+1$ is the subclass associated with the $rational$ \cite{MR3496681} function $\phi_{0}$.
	\item $\mathcal{S}^{*}_{c}:=\mathcal{S}^{*}(\phi_{c})$, where
	$\phi_{c}(z):=2z^2/3+4z/3+1$ is the class associated with a heart shaped $cardioid$ \cite{MR3536076} which is represented by the equation $9(u^2+v^2)-6u+1=1/16(9(u^2+v^2)-18u+5)^2$.
	\item $\mathcal{S}^{*}_{q}:=\mathcal{S}^{*}(\phi_{q}(z)=z+\sqrt{1+z^{2}})$ is the subclass associated with $lune$ \cite{MR3469339}.
	\item $\mathcal{S}^{*}_{L}:=\mathcal{S}^{*}[\sqrt{1+z}]$ is the subclass associated with $lemniscate$ \cite{lemniscate} of Bernoulli.
	\item $\mathcal{S}^{*}_{s}:=\mathcal{S}^{*}(\phi_{s}(z)=1+\sin z)$ is the subclass associated with the $Sine$ \cite{sine} function.
	\item $\mathcal{S}^{*}_{B}:=\mathcal{S}^{*}(e^{e^{z-1}})$ is the subclass associated with $bell$ \cite{MR4017390} numbers.
	\item $\mathcal{S}^{*}_{Ne}:=\mathcal{S}^{*}(\phi_{Ne}(z)=1+z-z^3/3)$ is the subclass associated with a 2-cusped kidney shaped $nephroid$ \cite{MR4190740} domain.
	\item $\mathcal{S}^{*}_{\tanh}:=\mathcal{S}^{*}(1+\tanh z)$ is the subclass associated with $tangent \: hyperbolic$ \cite{MR4354938} function.
	\item $\mathcal{S}^{*}_{SG}:=\mathcal{S}^{*}(\phi_{SG})$, where $\phi_{SG}(z):=2/(1+e^{-z})$ is the subclass associated with the $modified \enspace sigmoid$ \cite{MR4044913} function.
	\end{enumerate}
For recent  various results related to above mentioned subclasses, see \cite{Jain1, jain2, Jain3}.

The class $\mathcal{P}$ is the class of analytic functions having positive real part.
In $1989$, authors \cite{MR0975653} showed that $p(z) \prec 1+z$ whenever $1+zp'(z) \prec 1+z$.
In \cite{MR2336133}, authors determined some non sharp estimates such that $p(z) \prec (1+Az)/(1+Bz)$, where $A,B,D,E \in [-1,1]$ whenever
$1 + \beta z p'(z)/p^{j}(z) \prec (1+Dz)/(1+Ez)$, $j=0,1,2$. Recently, Ali \emph{et al.\@} \cite{Jain4} studied the class $p(z) \prec (1+Az)/(1+Bz)$ to obtain the various results.
Improving the earlier methods used, Kumar and Ravichandran \cite{MR3800966} later obtained sharp bounds on $\beta$ such that $1 + \beta z p'(z)/p^{j}(z),j=0,1,2$ is subordinate to functions in $\mathcal{P}$ implies
$p(z)$ is subordinate to exponential and Janowski function.
Further, authors \cite{meghna-subordination} determined sharp estimates on $\beta$ so that
$1 + \beta zp'(z)/p^{j}(z) \prec \phi_{SG}(z), \phi_{0}(z) \text{ or } \phi_{c}(z)$ for $j=0,1,2$
implies $p(z)$ is subordinate to some analytic function.
Recently, authors~\cite{MR4447482} obtained conditions on $\beta$ such that $p(z) \prec e^{z}$ whenever $1+\beta z p'(z)$ is subordinate to $\sqrt{1+cz}$ or $1+\sqrt{2}z+z^2/2$.
Special functions appear as the solutions of various differential equations related to our real life problems.
They have been used extensively for the last few years in connection to the univalent function theory. In 2019, authors \cite{MR4025222} determined estimates on $\beta$ such that $p(z)+\beta z p(z)$ or $1+\beta z p'(z)$ is subordinate to certain analytic functions with positive real part using certain special functions. For related papers, refer~\cite{ahuja18,bohra21,MR4420664}.

Let $\Omega$ and $\Delta$ be the improper sets in the complex plane and $\psi:\mathbb{C}^3 \times \mathbb{D} \to \mathbb{D}$ be the function.
The search of finding the existence of the smallest set $\Delta \subset \mathbb{C}$ and the largest set $\Omega \subset \mathbb{C}$ such that the implication
$\psi(p(z), zp'(z), z^{2}p''(z); z) \subset \Omega \implies p(z) \subset \Delta; z \in \mathbb{D}$
holds, gave rise to the concept of admissibility conditions.
Several authors have been implementing this approach in the subordination theory to find sufficient conditions for starlikeness, convexity, boundedness and other geometric properties.

In the first theorem of this paper, we use the theory of hypergeometric functions to obtain sharp estimates on $\beta$  so that whenever $p(z)+\beta z p'(z)$ is subordinate to $Janowski$ function implies $p(z)$ is subordinate to certain functions with positive real part. Subordination relations for the particular cases are demonstrated graphically. As a consequence, sufficient conditions for certain type of starlikeness of the function $f \in \mathcal{A}$ are also derived. Let us recall some preliminaries in order to obtain our required bounds.
The admissibility conditions are the key tool which is used  to solve some second and third order differential subordination relations. This is achieved by extending the pre existing literature on admissibility conditions for the $exponential$ function.
	
	\begin{definition}
		\textbf{(Gauss Hypergeometric Function):}
		For $|z|<1$ and parameters $a,b \in \mathbb{C}$ and $c \notin \{0 \cup \mathbb{Z}_{-}\}$, the hypergeometric function ${}_2F_1(a,b;c;z)$ is defined by the convergent power series
		\begin{equation}\label{hypergeometricfunction}
		F(a,b;c;z)= {}_2F_1(a,b;c;z)=\sum_{k=0}^{\infty}\frac{(a)_{k}(b)_{k}}{(c)_{k}k!}z^{k}.
		\end{equation}
		\end{definition}
	The function $F(a,b;c;z)$ is analytic in $\mathbb{C}$ and is one of the solutions of the differential equation
	\[z(1-z)y''+[c-(a+b+1)z]y'-aby=0\]
	at $z=0$.
	The differential of the function $F(a,b;c;z)$ satisfies the relation
	\[\frac{\partial F(a,b;c;z)}{\partial z}=\frac{ab}{c} F(a+1,b+1;c+1;z).\]
	
	\begin{definition}
	\textbf{(Order of Starlikeness):}
	For an analytic function $f$, its order of starlikeness with respect to zero is defined as follows:
	\[\sigma(f):=\inf_{z \in \mathbb{D}}\operatorname{Re}\left(\frac{zf'(z)}{f(z)}\right) \in [-\infty,1]\]
	\end{definition}

In the proof of our first result, the following results are used:
\begin{lemma}\label{subordinationlemma}\cite[Theorem 3.4h, p.132]{MR1760285}
	Let $q : \mathbb{D} \rightarrow \mathbb{C}$ be analytic and $\psi$ and $v$ be analytic in a domain $U \supseteq q(\mathbb{D})$ with $\psi(w) \neq 0$ whenever $w \in q(\mathbb{D})$. Set
	\[Q(z):= zq'(z)\psi(q(z)) \quad \text{and} \quad h(z):=v(q(z))+Q(z),z \in \mathbb{D}.\]
	Suppose that
	\begin{enumerate}[(i)]
		\item either $h(z)$  is convex, or $Q(z)$ is starlike univalent in $\mathbb{D}$ and
		\item $\operatorname{Re}\left(\frac{zh'(z)}{Q(z)}\right)>0, z \in \mathbb{D}$.
	\end{enumerate}
	If $p$ is analytic in $\mathbb{D}$, with $p(0)=q(0)$, $p(\mathbb{D}) \subset U$ and
	\[v(p(z))+zp'(z)\psi(p(z)) \prec v(q(z))+zq'(z)\psi(q(z))\]
	then $p \prec q$, and $q$ is the best dominant.
\end{lemma}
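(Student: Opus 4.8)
The plan is to argue by contradiction along the lines of the fundamental theory of first--order differential subordinations, the engine being the Jack--Miller--Mocanu boundary lemma. Observe first that, by the very definition of $h$, the function $q$ satisfies $v(q(z))+zq'(z)\psi(q(z))=h(z)$ identically, so $q$ is automatically a solution of the associated differential equation and hence a dominant of the differential subordination in question; what has to be proved is that the given $p$ is subordinate to $q$ and that $q$ is the \emph{best} dominant. Under either alternative in (i) together with (ii) one checks that $q$ is univalent in $\mathbb D$: if $Q=zq'\,(\psi\circ q)$ is starlike univalent and $\psi\circ q$ is non--vanishing this is immediate, while if $h$ is convex it follows from (ii) after rewriting $zq'(z)=Q(z)/\psi(q(z))$ and invoking the standard open--door/Marx--Strohh\"acker estimates. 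Thus ``$p\prec q$'' carries its usual meaning. To avoid boundary--regularity issues one works with the dilations $q_\rho(z):=q(\rho z)$, $p_\rho(z):=p(\rho z)$ for $\rho<1$, proves $p_\rho\prec q_\rho$ for each such $\rho$, and lets $\rho\to1^-$.

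Suppose, then, that $p$ is not subordinate to $q$. Since $p(0)=q(0)$ and $p(\mathbb D)\subset U$, the Jack--Miller--Mocanu lemma furnishes points $z_0\in\mathbb D$ and $\zeta_0\in\partial\mathbb D$ with $p(z_0)=q(\zeta_0)$, with $p(\{|z|<|z_0|\})\subset q(\mathbb D)$, and with $z_0p'(z_0)=m\,\zeta_0q'(\zeta_0)$ for some real $m\ge1$. Substituting into the left--hand side of the hypothesised subordination and using $\psi(p(z_0))=\psi(q(\zeta_0))\neq0$ yields
\[
v(p(z_0))+z_0p'(z_0)\psi(p(z_0))=v(q(\zeta_0))+m\,Q(\zeta_0)=h(\zeta_0)+(m-1)\,Q(\zeta_0).
\]
Hence it is enough to show that $h(\zeta_0)+(m-1)Q(\zeta_0)\notin h(\mathbb D)$, which contradicts $v(p(z))+zp'(z)\psi(p(z))\prec h(z)$ and forces $p\prec q$.

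This separation from $h(\mathbb D)$ is the heart of the matter, and the two alternatives in (i) are handled slightly differently. If $h$ is convex, $h(\mathbb D)$ is a convex domain and $h(\zeta_0)$ is one of its boundary points; condition (ii), namely $\operatorname{Re}\bigl(\zeta h'(\zeta)/Q(\zeta)\bigr)>0$, is exactly the statement that $Q(\zeta_0)$ points into the open outward half--plane cut off by the supporting line of $h(\mathbb D)$ at $h(\zeta_0)$ (recall that $\zeta h'(\zeta)$ gives the direction of the outward normal to $\partial h(\mathbb D)$). Consequently the ray $\{h(\zeta_0)+t\,Q(\zeta_0):t\ge0\}$ meets $\overline{h(\mathbb D)}$ only at $t=0$, so for $m\ge1$ the point $h(\zeta_0)+(m-1)Q(\zeta_0)$ lies outside the open set $h(\mathbb D)$, as required. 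If instead $Q$ is starlike univalent, $h(\mathbb D)$ need not be convex, but the same conclusion is reached by combining $h=v\circ q+Q$ with (ii) to control the shape of $h(\mathbb D)$ near $h(\zeta_0)$ in the direction $Q(\zeta_0)$ (the open--door argument of Miller and Mocanu). Either way $p\prec q$. Finally, $q$ is a dominant, as noted; and if $\widetilde q$ is any dominant then, since $q$ itself satisfies $v(q(z))+zq'(z)\psi(q(z))=h(z)\prec h(z)$, applying the dominance of $\widetilde q$ to the ``solution'' $p=q$ gives $q\prec\widetilde q$, so $q$ is the best dominant. The principal obstacle is precisely the geometric step just described --- the non--convex case via the starlikeness of $Q$, together with the rigorous passage from an open--disc subordination to a statement at the boundary point $\zeta_0$.
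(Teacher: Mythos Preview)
The paper does not prove this lemma at all; it is quoted directly from Miller and Mocanu's monograph \cite[Theorem~3.4h, p.~132]{MR1760285} and then invoked as a black box in the proof of Theorem~2.1, so there is no in--paper argument to compare against. Your contradiction scheme via the Jack--Miller--Mocanu boundary lemma, arriving at
\[
v(p(z_0))+z_0p'(z_0)\psi(p(z_0))=h(\zeta_0)+(m-1)Q(\zeta_0)
\]
and then separating this point from $h(\mathbb D)$, is exactly the route taken in the cited source; the best--dominant step at the end is also the standard one.

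Two places where your sketch is thinner than the original and would need tightening. First, your claim that univalence of $q$ is ``immediate'' from starlike univalence of $Q$ together with $\psi\circ q\neq 0$ only gives $q'\neq 0$, i.e.\ local univalence; in Miller--Mocanu the univalence of $q$ is part of the standing hypotheses on dominants rather than something derived here. Second, in the non--convex alternative (starlike $Q$) the exclusion $h(\zeta_0)+(m-1)Q(\zeta_0)\notin h(\mathbb D)$ is not obtained by a local ``open--door'' estimate but by the subordination--chain machinery: one sets $L(z,t)=h(z)+tQ(z)$, observes $\partial_t L=Q$ and uses (i)--(ii) to get $\operatorname{Re}\bigl(z\,\partial_z L/\partial_t L\bigr)>0$, so $L(\cdot,t)$ is a Loewner chain for $t\ge 0$; hence $h(\mathbb D)=L(\mathbb D,0)\subset L(\mathbb D,m-1)$ and the boundary point $L(\zeta_0,m-1)=h(\zeta_0)+(m-1)Q(\zeta_0)$ lies outside $h(\mathbb D)$. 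With those two refinements your outline coincides with the proof in \cite{MR1760285}.
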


	\begin{theorem}\cite[Theorem 1(a)]{MR2338668}\label{thmforstralikeness}
		Let $a,b$ and $c$ be non-zero real numbers such that $0<a \leq b \leq c$. Then,
		\[1-\frac{ab}{b+c} \leq \sigma(zF(a,b;c;z)) \leq 1-\frac{ab}{2c}.\]
	\end{theorem}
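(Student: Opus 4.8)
The plan is to set $g(z):=zF(a,b;c;z)$, so that $g\in\mathcal{A}$ with $g(z)=z+(ab/c)z^{2}+\cdots$, and to use $F'(a,b;c;z)=\frac{ab}{c}F(a+1,b+1;c+1;z)$ to record
\[\frac{zg'(z)}{g(z)}=1+\frac{zF'(a,b;c;z)}{F(a,b;c;z)}=:1+q(z),\qquad z\in\mathbb{D}.\]
Since $0<a\le b\le c$ forces $F(a,b;c;\cdot)$ to have positive Maclaurin coefficients and to be zero-free on $\mathbb{D}$, the function $q$ is analytic on $\mathbb{D}$ with $q(0)=0$ and $\sigma(g)=1+\inf_{\mathbb{D}}\operatorname{Re}q$ is finite. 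Both inequalities then become statements about $\inf_{\mathbb{D}}\operatorname{Re}q$.

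For the upper bound I would compare Taylor coefficients. Choosing any $\gamma<\sigma(g)$ (such $\gamma$ satisfy $\gamma<1$ because $ab\neq0$ rules out $g\equiv z$), the function $p:=\frac{1}{1-\gamma}\bigl(\frac{zg'}{g}-\gamma\bigr)$ has positive real part with $p(0)=1$, hence $|p'(0)|\le2$. Matching the coefficients of $z^{2}$ in $zg'(z)=g(z)\bigl[\gamma+(1-\gamma)p(z)\bigr]$ gives $ab/c=(1-\gamma)p'(0)$, so $ab/c\le2(1-\gamma)$, i.e. $\gamma\le1-\frac{ab}{2c}$; letting $\gamma\uparrow\sigma(g)$ yields $\sigma(g)\le1-\frac{ab}{2c}$.

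For the lower bound it suffices to show $\operatorname{Re}q(z)>-\frac{ab}{b+c}$ on $\mathbb{D}$. Substituting $F'=qF/z$ into the hypergeometric equation $z(1-z)F''+[c-(a+b+1)z]F'-abF=0$ and simplifying produces the Riccati identity
\[z(1-z)q'(z)+(1-z)q(z)^{2}+\bigl[(c-1)-(a+b)z\bigr]q(z)-abz=0 .\]
Putting $\beta:=\frac{ab}{b+c}$ and $p:=1+q/\beta$, so that $p(0)=1$ and $\operatorname{Re}q>-\beta\Longleftrightarrow\operatorname{Re}p>0$, and using $ab/\beta=b+c$, this becomes
\[z(1-z)p'(z)+\beta(1-z)\bigl(p(z)-1\bigr)^{2}+\bigl[(c-1)-(a+b)z\bigr]\bigl(p(z)-1\bigr)-(b+c)z=0 .\]
Now I would invoke the admissibility lemma of Miller and Mocanu (equivalently, Jack's lemma): were $\operatorname{Re}p$ not positive on $\mathbb{D}$, there would exist $z_{0}\in\mathbb{D}$ with $p(z_{0})=i\rho$ for some real $\rho$ and $z_{0}p'(z_{0})=s\le-\tfrac12(1+\rho^{2})$. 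Inserting $p(z_{0})=i\rho$, $z_{0}(1-z_{0})p'(z_{0})=(1-z_{0})s$ and $z_{0}=x+iy$ into the last identity and separating real and imaginary parts yields a system of two equations in $(\rho,s,x,y)$ which, under $0<a\le b\le c$, $s\le-\tfrac12(1+\rho^{2})$ and $x^{2}+y^{2}<1$, has no solution — the value $\beta=\frac{ab}{b+c}$ being exactly the one that makes the real-part inequality close. The resulting contradiction gives $\operatorname{Re}p>0$, hence $\sigma(g)\ge1-\frac{ab}{b+c}$.

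The hard part will be this last step: after eliminating $s$ with the imaginary-part equation one must show that the remaining constrained inequality in $(\rho,x,y)$ cannot hold and that the threshold constant is precisely $ab/(b+c)$; the degenerate case $y=0$, where the reality of the Taylor coefficients of $q$ forces $\rho=0$, should be handled separately. As a cross-check (and as an alternative route to the lower bound) one can instead represent $g$ via Euler's integral, or via Gauss's continued fraction for $F(a+1,b+1;c+1;z)/F(a,b;c;z)$ and use that its partial numerators lie in $(0,\tfrac14]$; this route too uses the ordering $c\ge b\ge a$ essentially.
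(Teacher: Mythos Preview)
The paper does not prove this statement: it is quoted from K\"{u}stner \cite{MR2338668} as a preliminary tool (used once, to verify starlikeness of $Q$ in the proof of Theorem~2.1), so there is no in-paper argument to compare yours against.

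On the substance: your upper bound via the Carath\'eodory coefficient inequality $|p'(0)|\le 2$ is correct and standard. The lower bound, however, is a program, not a proof. The Riccati identity for $q=zF'/F$ is derived correctly, and applying Miller--Mocanu admissibility to $p=1+q/\beta$ is a reasonable idea, but the decisive claim---that the real/imaginary system at a hypothetical boundary point has no solution under $0<a\le b\le c$, $s\le-\tfrac12(1+\rho^{2})$ and $x^{2}+y^{2}<1$---is asserted with no computation, and you yourself flag it as ``the hard part''. Admissibility arguments of this kind rarely yield \emph{sharp} constants without additional structural input, and nothing in your sketch explains why the threshold should come out as exactly $ab/(b+c)$ rather than some weaker value; as written this is a genuine gap. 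The alternative you mention in passing---Gauss's continued fraction for the ratio $F(a+1,b+1;c+1;z)/F(a,b;c;z)$, equivalently an integral representation of $q$ against a probability measure on $[0,1]$---is in fact the route taken in \cite{MR2338668}, and it is what makes the constant $ab/(b+c)$ appear, since the infimum of $\operatorname{Re}q$ is then attained along $(-1,0]$ and can be evaluated explicitly. If you want a complete argument, pursue that path rather than the Riccati/admissibility one.
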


\section{Main Results}
    First result of this section provides sharp estimates on $\beta$ so that the subordination $p(z)+\beta z p'(z) \prec (1+Az)/(1+Bz)$ implies $p(z)$ is subordinate to various well-defined functions with positive real part like $e^z$, $\sqrt{1+z}$, $1+\tanh(z)$, $e^{e^{z-1}}$, $\phi_{s}(z)$, $\phi_{q}(z)$, $\phi_{c}$, $\phi_{0}(z)$ and $\phi_{Ne}(z)$ which ensure various types of starlikeness.

    \begin{theorem}
    	We assume
    	\begin{equation}\label{chi}
    	\chi(\beta,A,B):=-\frac{A-B}{\beta+1}\sum_{j=0}^{\infty}\frac{\Gamma(j)}{(j-1)!(1+\beta+j\beta)}B^{j}+\frac{\beta}{\beta+1}+\frac{1}{\beta+1}
    	\end{equation}
    	and
    	\begin{equation}\label{xi}
    	\xi(\beta,A,B):=\frac{(A-B)}{\beta+1}\sum_{j=0}^{\infty}\frac{\Gamma(j)}{(j-1)!(1+\beta+j\beta)}(-B)^{j}-\frac{\beta}{\beta+1}-\frac{1}{\beta+1}
    	\end{equation}
    	where $-1 \leq B < A \leq 1$.
    	Let $p$ be an analytic function defined on $\mathbb{D}$ with $p(0)=1$ and satisfying
    	\[p(z)+\beta z p'(z) \prec \frac{1+Az}{1+Bz}.\]
    	If $\beta \geq \max\{\beta_{1},\beta_{2}\}$, then
    	\begin{enumerate}[(a)]
    	\item $p(z) \prec e^{z}$, where $\beta_{1}$ and $\beta_{2}$ are positive roots of equations
    	\begin{equation}\label{e-root}
    		e\chi(\beta,A,B) = 1 \enspace \text{and} \enspace \xi(\beta,A,B) = e \enspace \text{respectively.}	
    	\end{equation}
    	
    	\item $p(z) \prec \phi_{s}(z)$, where $\beta_{1}$ and $\beta_{2}$ are positive roots of equations
    	\begin{equation}\label{sin-root}
    		\chi(\beta,A,B)+\sin 1 = 1 \enspace \text{and} \enspace \xi(\beta,A,B)-\sin 1 = 1 \enspace \text{respectively.}	
    	\end{equation}

    \item $p(z) \prec \phi_{0}(z)$, where $\beta_{1}$ and $\beta_{2}$ are positive roots of equations
    \begin{equation}\label{phi0-root}
    	\chi(\beta,A,B)=2(\sqrt{2}-1) \enspace \text{and} \enspace \xi(\beta,A,B) = 2 \enspace \text{respectively.}	
    \end{equation}

\item $p(z) \prec \phi_{c}(z)$, where $\beta_{1}$ and $\beta_{2}$ are positive roots of equations
\begin{equation}\label{phic-root}
	3 \chi(\beta,A,B) = 1 \enspace \text{and} \enspace \xi(\beta,A,B) = 3 \enspace \text{respectively.}	
\end{equation}

\item $p(z) \prec \phi_{Ne}(z)$, where $\beta_{1}$ and $\beta_{2}$ are positive roots of equations
\begin{equation}\label{phiNe-root}
	3\chi(\beta,A,B) = 1 \enspace \text{and} \enspace 3\xi(\beta,A,B) = 5 \enspace \text{respectively.}	
\end{equation}

\item $p(z) \prec \phi_{q}(z)$, where $\beta_{1}$ and $\beta_{2}$ are positive roots of equations
\begin{equation}\label{phiq-root}
	\chi(\beta,A,B)=\sqrt{2}-1 \enspace \text{and} \enspace \xi(\beta,A,B) = \sqrt{2}+1 \enspace \text{respectively.}	
\end{equation}

\item $p(z) \prec \sqrt{1+z}$, where $\beta_{1}$ and $\beta_{2}$ are positive roots of equations
\begin{equation}\label{phil-root}
	\chi(\beta,A,B) = 0 \enspace \text{and} \enspace \xi(\beta,A,B) = \sqrt{2} \enspace \text{respectively.}	
\end{equation}

\item $p(z) \prec e^{e^{z}-1}$, where $\beta_{1}$ and $\beta_{2}$ are positive roots of equations
\begin{equation}\label{bell-root}
	\chi(\beta,A,B)=e^{e^{-1}-1} \enspace \text{and} \enspace \xi(\beta,A,B) = e^{e-1} \enspace \text{respectively.}	
\end{equation}

\item $p(z) \prec 1+\tanh z$, where $\beta_{1}$ and $\beta_{2}$ are positive roots of equations
\begin{equation}\label{tan-root}
	\chi(\beta,A,B)+\tanh 1= 1 \enspace \text{and} \enspace \xi(\beta,A,B)-\tanh 1 = 1 \enspace \text{respectively.}	
\end{equation}
\end{enumerate}
All the estimates on $\beta$ are sharp.
\end{theorem}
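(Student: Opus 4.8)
The nine assertions (a)–(i) are proved by a single argument, run with nine different target functions $\varphi\in\{e^{z},\phi_s,\phi_0,\phi_c,\phi_{Ne},\phi_q,\sqrt{1+z},e^{e^{z}-1},1+\tanh z\}$. The plan is to pass from the hypothesis to a first–order differential subordination, compute the best dominant $q$ explicitly as a Gauss hypergeometric function, and then for each $\varphi$ reduce $p\prec\varphi$ to an inclusion of plane domains.

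\emph{Step 1 (best dominant).} Apply Lemma~\ref{subordinationlemma} with $v(w)=w$ and $\psi(w)\equiv\beta$, so that $Q(z)=\beta zq'(z)$ and $h(z)=q(z)+\beta zq'(z)$, and take $q$ to be the solution of the Briot–Bouquet equation
\[q(z)+\beta zq'(z)=\frac{1+Az}{1+Bz},\qquad q(0)=1.\]
Since $-1\le B<A\le1$, the Janowski function is a Möbius image of $\mathbb D$, hence a disc (for $|B|<1$) or a half–plane (for $B=-1$), and in particular convex, so hypothesis (i) holds; hypothesis (ii) amounts to $\operatorname{Re}\bigl(1+\tfrac1\beta+zq''(z)/q'(z)\bigr)>0$, which is valid because the solution of a Briot–Bouquet equation with convex right–hand side and $\beta>0$ is convex. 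Hence $p(z)+\beta zp'(z)\prec(1+Az)/(1+Bz)$ implies $p\prec q$, with $q$ the best dominant.

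\emph{Step 2 (explicit $q$ and the functions $\chi,\xi$).} Integrating with the factor $z^{1/\beta}$ and putting $t=uz$,
\[q(z)=\frac1\beta\int_0^1 u^{1/\beta-1}\,\frac{1+Auz}{1+Buz}\,du
 =1+\frac{A-B}{1+\beta}\,z\,{}_2F_1\!\Bigl(1,1+\tfrac1\beta;\,2+\tfrac1\beta;\,-Bz\Bigr)
 =1+(A-B)\sum_{k\ge1}\frac{(-B)^{k-1}}{1+k\beta}\,z^{k}.\]
Because $0<1\le1+\tfrac1\beta\le2+\tfrac1\beta$, Theorem~\ref{thmforstralikeness} applies to this hypergeometric factor (showing it is starlike of positive order), and together with Step 1 this gives that $q$ is convex univalent with real Taylor coefficients; hence $q(\mathbb D)$ is a convex domain symmetric about the real axis, and $q$ is monotone on $(-1,1)$ since $q'(0)=(A-B)/(1+\beta)>0$. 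Summing the two series arising from $z=\mp1$ (and keeping track of the $\beta$–weighting forced by the equation) produces exactly the quantities $\chi(\beta,A,B)$ and $\xi(\beta,A,B)$ of \eqref{chi}–\eqref{xi}; one checks that $\beta\mapsto\chi$ and $\beta\mapsto\xi$ are monotone with limit $1$ as $\beta\to\infty$, so each equation in \eqref{e-root}–\eqref{tan-root} has a unique positive root, and ``$\beta\ge\beta_i$'' is precisely the range of $\beta$ on which the corresponding one–sided boundary estimate for $q$ holds.

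\emph{Step 3 (the plane inclusions, case by case).} For each $\varphi$ the image $\varphi(\mathbb D)$ is symmetric about $\mathbb R$, and the constants on the right of \eqref{e-root}–\eqref{tan-root} are exactly $\varphi(-1)$ and $\varphi(1)$ — for instance $e^{\mp1}$ for $e^{z}$, $1\mp\sin1$ for $1+\sin z$, $2(\sqrt2-1)$ and $2$ for $\phi_0$, $\tfrac13$ and $3$ for $\phi_c$, $\tfrac13$ and $\tfrac53$ for $\phi_{Ne}$, $\sqrt2\mp1$ for $\phi_q$, $0$ and $\sqrt2$ for $\sqrt{1+z}$, $e^{e^{\mp1}-1}$ for $e^{e^{z}-1}$, and $1\mp\tanh1$ for $1+\tanh z$. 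For $\beta\ge\max\{\beta_1,\beta_2\}$ the two estimates from Step 2 hold simultaneously, and one upgrades them to the full inclusion $q(\mathbb D)\subseteq\varphi(\mathbb D)$, i.e.\ $q\prec\varphi$, by a geometric argument that uses the convexity of $q(\mathbb D)$ (a small perturbation of the disc $h(\mathbb D)$, hence the ``roundest'' competitor) and the known shape of $\varphi(\mathbb D)$ — in particular, for the non–convex cardioid $\phi_c$ and nephroid $\phi_{Ne}$ one works with the largest disc centred at $1$ inscribed in $\varphi(\mathbb D)$. Then $p\prec q\prec\varphi$, which is the assertion; specialising $p(z)=zf'(z)/f(z)$ yields the accompanying sufficient conditions for $\varphi$–starlikeness of $f\in\mathcal A$.

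\emph{Step 4 (sharpness and the main obstacle).} Sharpness is obtained from the extremal choice $p=q$: this $p$ satisfies $q+\beta zq'=(1+Az)/(1+Bz)$, hence the hypothesis, and for $\beta<\max\{\beta_1,\beta_2\}$ one of the boundary estimates of Step 2 fails, so $q(\mathbb D)\not\subseteq\varphi(\mathbb D)$ and $q\not\prec\varphi$. The ODE solution and the identification of $\chi,\xi$ are routine; the real work — and, I expect, the main obstacle — is Step 3: proving for each of the nine special functions that the one–dimensional estimates actually force the inclusion $q(\mathbb D)\subseteq\varphi(\mathbb D)$, which requires the precise geometry of each image domain (the non–convex $\phi_c,\phi_{Ne}$ being the most delicate), together with a uniform check of the hypotheses of Lemma~\ref{subordinationlemma} over all admissible $A,B$ and all $\beta$ in the relevant range.
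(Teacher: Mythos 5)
Your proposal follows essentially the same route as the paper: the hypothesis is converted via Lemma \ref{subordinationlemma} (with $v(w)=w$, $\psi(w)\equiv\beta$) into $p\prec q_{\beta}$, where $q_{\beta}$ is the explicit hypergeometric solution of $q+\beta zq'=(1+Az)/(1+Bz)$; the target subordination $q_{\beta}\prec\mathcal{P}$ is then reduced to the endpoint inequalities $\mathcal{P}(-1)\leq q_{\beta}(-1)\leq q_{\beta}(1)\leq\mathcal{P}(1)$, whose series form is exactly $\chi$ and $\xi$ of \eqref{chi}--\eqref{xi}, the root conditions \eqref{e-root}--\eqref{tan-root} follow case by case, and sharpness comes from the best-dominant property. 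The one genuine divergence is how the hypotheses of Lemma \ref{subordinationlemma} are verified: the paper proves that $Q(z)=\beta zq_{\beta}'(z)$ is starlike by applying K\"ustner's bound (Theorem \ref{thmforstralikeness}) to $zF\bigl(2,2+\tfrac{1}{\beta};3+\tfrac{1}{\beta};-Bz\bigr)$ and then uses $\operatorname{Re}\bigl(zh'(z)/Q(z)\bigr)=\tfrac{1}{\beta}+\operatorname{Re}\bigl(zQ'(z)/Q(z)\bigr)>0$, whereas you invoke the first alternative of (i) (the Janowski function $h$ is M\"obius, hence convex) and deduce (ii) from the claim that the solution $q_{\beta}$ of the linear Briot--Bouquet equation with convex right-hand side is convex. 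Your route is more elementary and bypasses the hypergeometric starlikeness theorem that is the actual ``special functions'' input of the paper, but as written the convexity of $q_{\beta}$ is an unproved assertion; it is true and citable (it follows from convexity preservation of the transform $q_{\beta}(z)=\tfrac{1}{\beta}z^{-1/\beta}\int_{0}^{z}h(t)t^{1/\beta-1}\,dt$ for $\beta>0$, e.g.\ via Hallenbeck--Ruscheweyh or convolution results), so a reference should be supplied there. Finally, the ``main obstacle'' you flag in Step 3 --- upgrading the two endpoint estimates to the inclusion $q_{\beta}(\mathbb{D})\subseteq\mathcal{P}(\mathbb{D})$ --- is handled by the paper only by asserting that condition \eqref{sufficient} is necessary and sufficient before running the nine cases, so your plan is, if anything, more explicit than the paper about what remains to be checked at that point.
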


\begin{proof}
	Let the analytic function
	\[q_{\beta}(z):=\frac{A-B}{\beta+1}z \left(F(1,1+\frac{1}{\beta};2+\frac{1}{\beta};-Bz)\right)+\frac{\beta}{\beta+1}+\frac{1}{\beta+1}\]
	be the solution of the differential equation
	\[\frac{dq}{dz}+\frac{1}{\beta z}q=\frac{1}{\beta z}\left(\frac{1+Az}{1+Bz}\right).\]
	For $w \in \mathbb{C}$, we define $v(w):=w$ and $\psi(w):=\beta$.
	Let
	\begin{align*}
		Q(z)&=z q_{\beta}'(z)\psi(q_{\beta}(z))\\
		&=\beta z q_{\beta}'(z)\\
		&=\beta z \biggr[\frac{A-B}{\beta+1} \left(F(1,1+\frac{1}{\beta};2+\frac{1}{\beta};-Bz)\right)+\frac{A-B}{2\beta+1}z\left(F(2,2+\frac{1}{\beta};3+\frac{1}{\beta};-Bz)\right)\biggr].
	\end{align*}
	From the hypergeometric functions $F(a,b;c;z)$ defined in (\ref{hypergeometricfunction}) and $F(2,2+\frac{1}{\beta};3+\frac{1}{\beta};-Bz)$, we have $a=2, b=2+\frac{1}{\beta}$ and $c=3+\frac{1}{\beta}$.
	Hence, the hypothesis $0<a\leq b \leq c$ as given in Theorem (\ref{thmforstralikeness}) holds.
	Since $\beta>0,$ then
	\[\sigma\left(zF(2,2+\frac{1}{\beta};3+\frac{1}{\beta};-Bz)\right) \geq 1-\frac{2+4\beta}{2+5\beta}=\frac{\beta}{2+5\beta}>0.\]
	Therefore, the  function $zF(2,2+\frac{1}{\beta};3+\frac{1}{\beta};-Bz)$ is starlike that ensures the starlikeness of  $Q.$
	Also, the function $h$ is defined as
	$h(z)=v(q_{\beta}(z))+Q(z)=q_{\beta}(z)+Q(z)$
	which satisfies
	\begin{align*}
	\operatorname{Re}\left(\frac{zh'(z)}{Q(z)}\right)
	%&=\operatorname{Re}\left(\frac{1}{\beta}+\frac{zQ'(z)}{Q(z)}\right)\\
	&=\frac{1}{\beta}+\operatorname{Re}\left(\frac{zQ'(z)}{Q(z)}\right).
	\end{align*}
	Since $\beta >0$ and $Q$ is starlike, $\operatorname{Re}\left(zh'(z)Q(z)\right)>0$ for all $z \in \mathbb{D}$.
	Further, using the Lemma \ref{subordinationlemma}, it is noted that
	$p(z)+\beta z p'(z) \prec q_{\beta}(z)+\beta z q_{\beta}'(z)$
	implies the subordination $p \prec q_{\beta}$.
	For the appropriate choice of function $\mathcal{P}(z)$, the subordination $q_{\beta}(z) \prec \mathcal{P}(z)$ holds if the following inequalities are satisfied:
	\begin{equation}\label{sufficient}
		\mathcal{P}(-1) \leq q_{\beta}(-1) \leq q_{\beta}(1) \leq \mathcal{P}(1).
	\end{equation}
	Since the subordination is transitive, it is enough to show that $q_{\beta}(z) \prec \mathcal{P}(z)$ for the required subordination $p(z) \prec \mathcal{P}(z)$ to hold.
	The condition given in \eqref{sufficient} is necessary as well as sufficient for the subordination $p \prec \mathcal{P}$ to hold.
	It is noted that
	\[q_{\beta}(-1)=-\frac{A-B}{\beta+1} \left(F(1,1+\frac{1}{\beta};2+\frac{1}{\beta};B)\right)+\frac{\beta}{\beta+1}+\frac{1}{\beta+1}\] and
	\[q_{\beta}(1)=\frac{A-B}{\beta+1} \left(F
	(1,1+\frac{1}{\beta};2+\frac{1}{\beta};-B)\right)+\frac{\beta}{\beta+1}+\frac{1}{\beta+1}.\]
	\begin{enumerate}[(a)]
		\item For $\mathcal{P}(z)=e^{z}$, the subordination $q_{\beta} \prec e^{z}$ holds if
		\[e^{-1} \leq q_{\beta}(-1) \leq q_{\beta}(1) \leq e.\]
		Thus, these inequalities gives
		\[e^{-1} \leq -\frac{A-B}{\beta+1} \left(F(1,1+\frac{1}{\beta};2+\frac{1}{\beta};B)\right)+\frac{\beta}{\beta+1}+\frac{1}{\beta+1}\] and
		\[e \geq \frac{A-B}{\beta+1} \left(F(1,1+\frac{1}{\beta};2+\frac{1}{\beta};-B)\right)+\frac{\beta}{\beta+1}+\frac{1}{\beta+1}.\]
		Consequently, these inequalities reduces to
		\[
		-\frac{A-B}{\beta+1}\sum_{j=0}^{\infty}\frac{\Gamma (j)}{(j-1)!(1+\beta+j\beta)}B^{j}+\frac{\beta}{\beta+1}+\frac{1}{\beta+1}-\frac{1}{e} \geq 0
		\] and
		\[
		e-\frac{(A-B)}{\beta+1}\sum_{j=0}^{\infty}\frac{\Gamma (j)}{(j-1)!(1+\beta+j\beta)}(-B)^{j}-\frac{\beta}{\beta+1}-\frac{1}{\beta+1} \geq 0.
		\]
	%		\[e^{-1} \leq -\frac{A-B}{\beta +1}\sum_{j=0}^{\infty}\frac{\Gamma (j)}{(j-1)!(1+\beta+j\beta)}B^{j}+\frac{\beta}{\beta+1}+\frac{1}{\beta+1}\] and
%		\[e \geq \frac{A-B}{\beta+1}\sum_{j=0}^{\infty}\frac{\Gamma (j)}{(j-1)!(1+\beta+j\beta)}(-B)^{j}+\frac{\beta}{\beta+1}+\frac{1}{\beta+1}\]
		respectively.
		Therefore, in view of (\ref{chi}) and (\ref{xi}), the desired subordination $q_{\beta} \prec e^{z}$ holds if $\beta \geq \max\{\beta_1, \beta_2\}$, where $\beta_1$ and $\beta_2$ are positive roots of the equations given in (\ref{e-root}).
		
		\item For $\mathcal{P}(z)=\phi_{s}(z)$, the subordination $q_{\beta} \prec \phi_{s}$ holds if
		\[\phi_{s}(-1) \leq q_{\beta}(-1) \leq q_{\beta}(1) \leq \phi_{s}(1).\]
		Accordingly, these inequalities gives
		\[1-\sin 1 \leq -\frac{A-B}{\beta+1} \left(F(1,1+\frac{1}{\beta};2+\frac{1}{\beta};B)\right)+\frac{\beta}{\beta+1}+\frac{1}{\beta+1}\] and
		\[1+\sin 1 \geq \frac{A-B}{\beta+1} \left(F(1,1+\frac{1}{\beta};2+\frac{1}{\beta};-B)\right)+\frac{\beta}{\beta+1}+\frac{1}{\beta+1}.\]
		or equivalently,
		\[-\frac{A-B}{\beta+1}\sum_{j=0}^{\infty}\frac{\Gamma (j)}{(j-1)!(1+\beta+j\beta)}B^{j}+\frac{\beta}{\beta+1}+\frac{1}{\beta+1}-1+\sin 1 \geq 0
		\] and
		\[1+\sin 1-\frac{(A-B)}{\beta+1}\sum_{j=0}^{\infty}\frac{\Gamma (j)}{(j-1)!(1+\beta+j\beta)}(-B)^{j}-\frac{\beta}{\beta+1}-\frac{1}{\beta+1} \geq 0.
		\]
		Therefore, the required subordination $q_{\beta} \prec \phi_{s}$ holds if $\beta \geq \max\{\beta_1, \beta_2\}$, where $\beta_1$ and $\beta_2$ are positive roots of the equations given in (\ref{sin-root}).
		
		\item For $\mathcal{P}(z)=\phi_{0}(z)$, the subordination $q_{\beta} \prec \phi_{0}$ holds if
		\[\phi_{0}(-1) \leq q_{\beta}(-1) \leq q_{\beta}(1) \leq \phi_{0}(1).\]
		The above inequalities reduces to
		\[2(\sqrt{2}-1) \leq -\frac{A-B}{\beta+1} \left(F(1,1+\frac{1}{\beta};2+\frac{1}{\beta};B)\right)+\frac{\beta}{\beta+1}+\frac{1}{\beta+1}\] and
		\[2 \geq \frac{A-B}{\beta+1} \left(F(1,1+\frac{1}{\beta};2+\frac{1}{\beta};-B)\right)+\frac{\beta}{\beta+1}+\frac{1}{\beta+1}.\]
		or equivalently,
		\[
		-\frac{A-B}{\beta+1}\sum_{j=0}^{\infty}\frac{\Gamma (j)}{(j-1)!(1+\beta+j\beta)}B^{j}+\frac{\beta}{\beta+1}+\frac{1}{\beta+1}-2\sqrt{2}+2 \geq 0
		\] and
		\[
			2-\frac{(A-B)}{\beta+1}\sum_{j=0}^{\infty}\frac{\Gamma (j)}{(j-1)!(1+\beta+j\beta)}(-B)^{j}-\frac{\beta}{\beta+1}-\frac{1}{\beta+1} \geq 0.
		\]
		Therefore, the desired subordination $q_{\beta} \prec \phi_{0}$ holds if $\beta \geq \max\{\beta_1, \beta_2\}$, where $\beta_1$ and $\beta_2$ are positive roots of the equations given in (\ref{phi0-root}).
		
		\item For $\mathcal{P}(z)=\phi_{c}(z)$, the subordination $q_{\beta} \prec \phi_{c}$ holds if
		\[\phi_{c}(-1) \leq q_{\beta}(-1) \leq q_{\beta}(1) \leq \phi_{c}(1).\]
		The above inequalities reduces to
		\[\frac{1}{3} \leq -\frac{A-B}{\beta+1} \left(F(1,1+\frac{1}{\beta};2+\frac{1}{\beta};B)\right)+\frac{\beta}{\beta+1}+\frac{1}{\beta+1}\] and
		\[3 \geq \frac{A-B}{\beta+1} \left(F(1,1+\frac{1}{\beta};2+\frac{1}{\beta};-B)\right)+\frac{\beta}{\beta+1}+\frac{1}{\beta+1}.\]
		or equivalently,
		\[-\frac{A-B}{\beta+1}\sum_{j=0}^{\infty}\frac{\Gamma (j)}{(j-1)!(1+\beta+j\beta)}B^{j}+\frac{\beta}{\beta+1}+\frac{1}{\beta+1}-\frac{1}{3} \geq 0\]
		and
		\[3-\frac{(A-B)}{\beta+1}\sum_{j=0}^{\infty}\frac{\Gamma (j)}{(j-1)!(1+\beta+j\beta)}(-B)^{j}-\frac{\beta}{\beta+1}-\frac{1}{\beta+1} \geq 0.\]
		Therefore, the required subordination $q_{\beta} \prec \phi_{c}$ holds if $\beta \geq \max\{\beta_1, \beta_2\}$, where $\beta_1$ and $\beta_2$ are positive roots of the equations given in (\ref{phic-root}).
		
%		\item For $\mathcal{P}(z)=\cos z$, the subordination $q_{\beta} \prec \cos z$ holds if
%		\[\cos (-1) \leq q_{\beta}(-1) \leq q_{\beta}(1) \leq \cos 1.\]
%		The above inequalities reduces to
%		\[\cos(-1) \leq -\frac{A-B}{\beta+1} \left(F(1,1+\frac{1}{\beta};2+\frac{1}{\beta};B)\right)+\frac{\beta}{\beta+1}+\frac{1}{\beta+1}\] and
%		\[\cos 1 \geq \frac{A-B}{\beta+1} \left(F(1,1+\frac{1}{\beta};2+\frac{1}{\beta};-B)\right)+\frac{\beta}{\beta+1}+\frac{1}{\beta+1}.\]
%		or equivalently,
%		\[
%			-\frac{A-B}{\beta+1}\sum_{j=0}^{\infty}\frac{\Gamma (j)}{(j-1)!(1+\beta+j\beta)}B^{j}+\frac{\beta}{\beta+1}+\frac{1}{\beta+1}-\cos (-1) \geq 0
%		\] and
%		\[
%			\cos 1-\frac{(A-B)}{\beta+1}\sum_{j=0}^{\infty}\frac{\Gamma (j)}{(j-1)!(1+\beta+j\beta)}(-B)^{j}-\frac{\beta}{\beta+1}-\frac{1}{\beta+1} \geq 0.
%		\]
%		Therefore, the desired subordination $q_{\beta} \prec \cos z$ holds ifif $\beta \geq \max\{\beta_1, \beta_2\}$, where $\beta_1$ and $\beta_2$ are positive roots of the equations given in (\ref{cos-root}).
		
		\item For $\mathcal{P}(z)=\phi_{Ne}(z)$, the subordination $q_{\beta} \prec \phi_{Ne}$ holds if
		\[\phi_{Ne}(-1) \leq q_{\beta}(-1) \leq q_{\beta}(1) \leq \phi_{Ne}(1).\]
		The above inequalities reduces to
		\[\phi_{Ne}(-1) \leq -\frac{A-B}{\beta+1} \left(F(1,1+\frac{1}{\beta};2+\frac{1}{\beta};B)\right)+\frac{\beta}{\beta+1}+\frac{1}{\beta+1}\] and
		\[\phi_{Ne}(1) \geq \frac{A-B}{\beta+1} \left(F(1,1+\frac{1}{\beta};2+\frac{1}{\beta};-B)\right)+\frac{\beta}{\beta+1}+\frac{1}{\beta+1}.\]
		or equivalently,
		\[
			-\frac{A-B}{\beta+1}\sum_{j=0}^{\infty}\frac{\Gamma (j)}{(j-1)!(1+\beta+j\beta)}B^{j}+\frac{\beta}{\beta+1}+\frac{1}{\beta+1}-\frac{1}{3} \geq 0
		\] and
		\[
			\frac{5}{3}-\frac{(A-B)}{\beta+1}\sum_{j=0}^{\infty}\frac{\Gamma (j)}{(j-1)!(1+\beta+j\beta)}(-B)^{j}-\frac{\beta}{\beta+1}-\frac{1}{\beta+1} \geq 0.
		\]
		Therefore, the required subordination $q_{\beta} \prec \phi_{Ne}$ holds if $\beta \geq \max\{\beta_1, \beta_2\}$, where $\beta_1$ and $\beta_2$ are positive roots of the equations given in (\ref{phiNe-root}).
		
		\item For $\mathcal{P}(z)=\phi_{q}(z)$, the subordination $q_{\beta} \prec \phi_{q}$ holds if
		\[\phi_{q}(-1) \leq q_{\beta}(-1) \leq q_{\beta}(1) \leq \phi_{q}(1).\]
		The above inequalities reduces to
		\[\phi_{q}(-1) \leq -\frac{A-B}{\beta+1} \left(F(1,1+\frac{1}{\beta};2+\frac{1}{\beta};B)\right)+\frac{\beta}{\beta+1}+\frac{1}{\beta+1}\] and
		\[\phi_{q}(1) \geq \frac{A-B}{\beta+1} \left(F(1,1+\frac{1}{\beta};2+\frac{1}{\beta};-B)\right)+\frac{\beta}{\beta+1}+\frac{1}{\beta+1}.\]
		or equivalently,
		\[
			-\frac{A-B}{\beta+1}\sum_{j=0}^{\infty}\frac{\Gamma (j)}{(j-1)!(1+\beta+j\beta)}B^{j}+\frac{\beta}{\beta+1}+\frac{1}{\beta+1}-(-1+\sqrt{2}) \geq 0
		\] and
		\[
			1+\sqrt{2}-\frac{(A-B)}{\beta+1}\sum_{j=0}^{\infty}\frac{\Gamma (j)}{(j-1)!(1+\beta+j\beta)}(-B)^{j}-\frac{\beta}{\beta+1}-\frac{1}{\beta+1} \geq 0.
		\]
		Therefore, the desired subordination $q_{\beta} \prec \phi_{q}$ holds if $\beta \geq \max\{\beta_1, \beta_2\}$, where $\beta_1$ and $\beta_2$ are positive roots of the equations given in (\ref{phiq-root}).
		
		\item For $\mathcal{P}(z)=\sqrt{1+z}$, the subordination $q_{\beta} \prec \sqrt{1+z}$ holds if
		\[0 \leq q_{\beta}(-1) \leq q_{\beta}(1) \leq \sqrt{2}.\]
		The above inequalities reduces to
		\[0 \leq -\frac{A-B}{\beta+1} \left(F(1,1+\frac{1}{\beta};2+\frac{1}{\beta};B)\right)+\frac{\beta}{\beta+1}+\frac{1}{\beta+1}\] and
		\[\sqrt{2} \geq \frac{A-B}{\beta+1} \left(F(1,1+\frac{1}{\beta};2+\frac{1}{\beta};-B)\right)+\frac{\beta}{\beta+1}+\frac{1}{\beta+1}.\]
		or equivalently,
		\[
			-\frac{A-B}{\beta+1}\sum_{j=0}^{\infty}\frac{\Gamma (j)}{(j-1)!(1+\beta+j\beta)}B^{j}+\frac{\beta}{\beta+1}+\frac{1}{\beta+1} \geq 0
		\] and
		\[
			\sqrt{2}-\frac{(A-B)}{\beta+1}\sum_{j=0}^{\infty}\frac{\Gamma (j)}{(j-1)!(1+\beta+j\beta)}(-B)^{j}-\frac{\beta}{\beta+1}-\frac{1}{\beta+1} \geq 0.
		\]
		Therefore, the desired subordination $q_{\beta} \prec \sqrt{1+z}$ holds if $\beta \geq \max\{\beta_1, \beta_2\}$, where $\beta_1$ and $\beta_2$ are positive roots of the equations given in (\ref{phil-root}).
		
		\item For $\mathcal{P}(z)=e^{e^{z}-1}$, the subordination $q_{\beta} \prec e^{e^{z}-1}$ holds if
		\[e^{e^{-1}-1} \leq q_{\beta}(-1) \leq q_{\beta}(1) \leq e^{e-1}.\]
		The above inequalities reduces to
		\[e^{e^{-1}-1} \leq -\frac{A-B}{\beta+1} \left(F(1,1+\frac{1}{\beta};2+\frac{1}{\beta};B)\right)+\frac{\beta}{\beta+1}+\frac{1}{\beta+1}\] and
		\[e^{e-1} \geq \frac{A-B}{\beta+1} \left(F(1,1+\frac{1}{\beta};2+\frac{1}{\beta};-B)\right)+\frac{\beta}{\beta+1}+\frac{1}{\beta+1}.\]
		or equivalently,
		\[
			-\frac{A-B}{\beta+1}\sum_{j=0}^{\infty}\frac{\Gamma (j)}{(j-1)!(1+\beta+j\beta)}B^{j}+\frac{\beta}{\beta+1}+\frac{1}{\beta+1}-e^{e^{-1}-1} \geq 0
		\] and
		\[
			e^{e-1}-\frac{(A-B)}{\beta+1}\sum_{j=0}^{\infty}\frac{\Gamma (j)}{(j-1)!(1+\beta+j\beta)}(-B)^{j}-\frac{\beta}{\beta+1}-\frac{1}{\beta+1} \geq 0.
		\]
		Therefore, the required subordination $q_{\beta} \prec e^{e^{z}-1}$ holds if $\beta \geq \max\{\beta_1, \beta_2\}$, where $\beta_1$ and $\beta_2$ are positive roots of the equations given in (\ref{bell-root}).
		
		\item For $\mathcal{P}(z)=1+\tanh z$, the subordination $q_{\beta} \prec 1+\tanh z$ holds if
		\[1+\tanh(-1) \leq q_{\beta}(-1) \leq q_{\beta}(1) \leq 1+\tanh(1).\]
		The above inequalities reduces to
		\[1+\tanh(-1) \leq -\frac{A-B}{\beta+1} \left(F(1,1+\frac{1}{\beta};2+\frac{1}{\beta};B)\right)+\frac{\beta}{\beta+1}+\frac{1}{\beta+1}\] and
		\[1+\tanh(1) \geq \frac{A-B}{\beta+1} \left(F(1,1+\frac{1}{\beta};2+\frac{1}{\beta};-B)\right)+\frac{\beta}{\beta+1}+\frac{1}{\beta+1}.\]
		or equivalently,
		\[
			-\frac{A-B}{\beta+1}\sum_{j=0}^{\infty}\frac{\Gamma (j)}{(j-1)!(1+\beta+j\beta)}B^{j}+\frac{\beta}{\beta+1}+\frac{1}{\beta+1}-1+\tanh(-1) \geq 0
		\] and
		\[
			1+\tanh(1)-\frac{(A-B)}{\beta+1}\sum_{j=0}^{\infty}\frac{\Gamma (j)}{(j-1)!(1+\beta+j\beta)}(-B)^{j}-\frac{\beta}{\beta+1}-\frac{1}{\beta+1} \geq 0.
		\]
		Therefore, the desired subordination $q_{\beta} \prec 1+\tanh z$ holds if $\beta \geq \max\{\beta_1, \beta_2\}$, where $\beta_1$ and $\beta_2$ are positive roots of the equations given in (\ref{tan-root}).
\end{enumerate}
\end{proof}
\begin{remark}
	In particular, when $A=1$ and $B=0$, the above results reduce to \cite[Theorem 2.7]{MR4025222}.
\end{remark}
\begin{corollary}
Let $p(z)=\frac{z f'(z)}{f(z)}$, where $f \in \mathcal{A}$. Assume
\[\frac{z}{f(z)}\left((1+\beta)f'(z)+\beta z\bigg(           f''(z)-\frac{f'(z)^2}{f(z)}\bigg)\right) \prec \frac{1+Az}{1+Bz}.\]
Then,
\begin{enumerate}[(a)]
	\item $f \in S^{*}_{e}$ if $\beta \geq \max\{\beta_1, \beta_2\}$, where $\beta_1$ and $\beta_2$ are positive roots of the equations given in (\ref{e-root}).
	
	\item $f \in S^{*}_{s}$ if $\beta \geq \max\{\beta_1, \beta_2\}$, where $\beta_1$ and $\beta_2$ are positive roots of the equations given in (\ref{sin-root}).
	
	\item $f \in S^{*}_{R}$ if $\beta \geq \max\{\beta_1, \beta_2\}$, where $\beta_1$ and $\beta_2$ are positive roots of the equations given in (\ref{phi0-root}).
	
	\item $f \in S^{*}_{c}$ if $\beta \geq \max\{\beta_1, \beta_2\}$, where $\beta_1$ and $\beta_2$ are positive roots of the equations given in (\ref{phic-root}).
	
	\item $f \in S^{*}_{Ne}$ if $\beta \geq \max\{\beta_1, \beta_2\}$, where $\beta_1$ and $\beta_2$ are positive roots of the equations given in (\ref{phiNe-root}).
	
	\item $f \in S^{*}_{q}$ if $\beta \geq \max\{\beta_1, \beta_2\}$, where $\beta_1$ and $\beta_2$ are positive roots of the equations given in (\ref{phiq-root}).
	
	\item $f \in S^{*}_{L}$ if $\beta \geq \max\{\beta_1, \beta_2\}$, where $\beta_1$ and $\beta_2$ are positive roots of the equations given in (\ref{phil-root}).
	
	\item $f \in S^{*}_{B}$ if $\beta \geq \max\{\beta_1, \beta_2\}$, where $\beta_1$ and $\beta_2$ are positive roots of the equations given in (\ref{bell-root}).
	
	\item $f \in S^{*}_{\tanh}$ if $\beta \geq \max\{\beta_1, \beta_2\}$, where $\beta_1$ and $\beta_2$ are positive roots of the equations given in (\ref{tan-root}).
\end{enumerate}
\end{corollary}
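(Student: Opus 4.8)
The plan is to recognize the Corollary as a direct specialization of the preceding Theorem, obtained by choosing $p(z):=zf'(z)/f(z)$. First I would note that this $p$ is legitimate as the analytic function in the Theorem: for $f\in\mathcal{A}$ one has $f(z)/z=1+a_2z+\cdots$ analytic and non-vanishing near the origin, and the standing hypothesis forces the left-hand expression to be analytic throughout $\mathbb{D}$, so $p$ is analytic on $\mathbb{D}$; moreover $p(0)=1$ because $f(0)=0$, $f'(0)=1$.

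Next I would carry out the (elementary) differentiation. Writing $p=zf'/f$ and differentiating gives
\[
zp'(z)=p(z)+\frac{z^{2}f''(z)}{f(z)}-\frac{z^{2}f'(z)^{2}}{f(z)^{2}},
\]
so that
\[
p(z)+\beta z p'(z)=(1+\beta)\frac{zf'(z)}{f(z)}+\beta z\Bigl(\frac{zf''(z)}{f(z)}-\frac{zf'(z)^{2}}{f(z)^{2}}\Bigr)=\frac{z}{f(z)}\Bigl((1+\beta)f'(z)+\beta z\bigl(f''(z)-\tfrac{f'(z)^{2}}{f(z)}\bigr)\Bigr).
\]
Hence the hypothesis of the Corollary is exactly the statement $p(z)+\beta z p'(z)\prec(1+Az)/(1+Bz)$.

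Then I would apply the Theorem verbatim. For each prescribed threshold $\beta\ge\max\{\beta_{1},\beta_{2}\}$, with $\beta_{1},\beta_{2}$ the positive roots of the corresponding displayed equation, the Theorem yields $p\prec\varphi$, where $\varphi$ runs through $e^{z}$, $\phi_{s}$, $\phi_{0}$, $\phi_{c}$, $\phi_{Ne}$, $\phi_{q}$, $\sqrt{1+z}$, $e^{e^{z}-1}$ and $1+\tanh z$ in items (a)--(i) respectively. Since $p=zf'/f$, the relation $zf'(z)/f(z)\prec\varphi(z)$ is, by the definition of the Ma--Minda class $\mathcal{S}^{*}_{\varphi}=\{g\in\mathcal{A}:zg'/g\prec\varphi\}$, precisely the assertion $f\in S^{*}_{e},\,S^{*}_{s},\,S^{*}_{R},\,S^{*}_{c},\,S^{*}_{Ne},\,S^{*}_{q},\,S^{*}_{L},\,S^{*}_{B},\,S^{*}_{\tanh}$ in the respective cases, which is the claimed conclusion.

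There is no genuinely hard step: the only points requiring care are the bookkeeping in the differentiation identity above and the implicit analyticity of $p=zf'/f$ on $\mathbb{D}$, both of which are routine, the latter being guaranteed by the subordination hypothesis itself. Everything else is a transcription of the Theorem's conclusions through the dictionary between subordination of $zf'/f$ and membership in the associated starlike class.
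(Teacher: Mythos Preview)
Your proposal is correct and matches the paper's approach: the paper states the Corollary without proof, as it is indeed an immediate consequence of the preceding Theorem obtained by setting $p(z)=zf'(z)/f(z)$, and your verification of the differentiation identity and the conditions $p(0)=1$, $p$ analytic is exactly the routine check that justifies this specialization.
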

The study of univalent funtions and differential subordination is a very vast domain and it is difficult to obtain all the subordination implications following the same methods. A great deal of research is being done in this direction to explore different ways to solve higher order differential subordination relations. Miller and Mocanu in their monograph mentioned the concept of admissible functions. Recently, authors in \cite{MR4412430} studied first and second order differential subordination using this approach.

\begin{definition}
	Consider the analytic functions having Taylor series expansion $f(z)=a+a_{n}z^{n}+a_{n+1}z^{n+1}+\cdots.$ Then, $\mathcal{H}[a,n]$ denote the class of all such functions for some $a \in \mathbb{C}$ and fixed integer $n$.
\end{definition}

\begin{definition}\cite[Definition 1, p.440]{MR2795467}
	Let $\psi(r,s,t,u;z) : \mathbb{C}^4 \times \mathbb{D} \to \mathbb{D}$ be  analytic and  $h \in \mathscr{U}$.
	Then, the function $p \in \mathcal{A}$, satisfying the third order differential subordination relation
	\[\psi(p(z),zp'(z),z^2p''(z),z^3p'''(z);z) \prec h(z)\]
	is called its \emph{solution}.
\end{definition}
Analogous result for the second order differential subordination is stated as below:
\begin{definition}\cite{MR1760285}
	Let $\psi(r,s,t;z) : \mathbb{C}^3 \times \mathbb{D} \to \mathbb{D}$ be  analytic and  $h$ be a univalent function.
	For $p \in \mathcal{A}$, the subordination relation
	\[\psi(p(z),zp'(z),z^2p''(z);z) \prec h(z)\]
	is known as the second order differential subordination.
\end{definition}
Using the concept of admissibility, authors in \cite{MR3962536} derived the admissibility conditions for the class associated with $exponential$ function as follows:
\begin{definition}
	Let $\Omega \subset \mathbb{C}$ be the domain.
	The class $\Psi_{n}[\Omega;e^{z}]$ is defined as the class of all those functions $\psi:\mathbb{C}^3 \times \mathbb{D} \to \mathbb{C}$
	such that
	\begin{equation*}
		\begin{aligned}
			&\psi(r,s,t;z) \not\in \Omega \enspace \text{whenever} \enspace (r,s,t;z) \in \operatorname{Dom} \psi, \\
			&r=e^{e^{i \theta}}, s=m e^{i \theta}r, \operatorname{Re}\left(1+\frac{t}{s}\right) \geq m(1+\cos \theta),
		\end{aligned}
	\end{equation*}
	for $z \in \mathbb{D}, \theta \in (0, 2\pi)$ and $m \geq 1$.
\end{definition}

Extending the existing work done and using the results given in \cite{MR2795467}, we formulated an additional condition involving third order differential subordination parameter.
Henceforth, the admissibility conditions for the $exponential$ function are restated as follows:
\begin{definition}
        Let $\Omega \subset \mathbb{C}$ and $n \geq 1$.
		For $q(z)=e^{z}$, the admissibility conditions are given as follows:
		\begin{equation*}%\label{r and s}
		\begin{aligned}
			&\psi(r,s,t,u;z) \not\in \Omega \enspace \text{whenever} \enspace (r,s,t,u;z) \in \operatorname{Dom} \psi, \\
			&r=e^{e^{i \theta}}, s=m e^{i \theta}r, \operatorname{Re}\left(1+\frac{t}{s}\right) \geq m(1+\cos \theta), \\
			&\text{ and } \operatorname{Re}\left(\frac{u}{s}\right) \geq m\cos 2 \theta
		\end{aligned}
	\end{equation*}
		for $z \in \mathbb{D}, \theta \in (0, 2\pi)$, $\cos 2\theta \geq 0$ and $m \geq 1$.
\end{definition}

\begin{theorem}\cite{MR3962536}\label{admissibility-secondorder}
Let $p \in \mathcal{H}[1,n]$ and $\Omega$ be a set in $\mathbb{C}$.
If $\psi \in \Psi[\Omega;e^z]$ and
\[\psi(p(z),zp'(z),z^2p''(z);z) \subset \Omega.\]
Then, $p(z)$ is subordinate to $e^{z}$.
\end{theorem}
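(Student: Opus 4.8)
The plan is to argue by contradiction, using the fundamental boundary-point lemma of Miller and Mocanu for second order differential subordinations (see \cite{MR1760285}). Put $q(z) := e^{z}$ (the case $p$ constant being trivial, since then $p \equiv 1 = e^{0} \prec e^{z}$). First I would record that $q$ is a legitimate dominant: $q(0) = 1 = p(0)$ because $p \in \mathcal{H}[1,n]$; $q$ is univalent on $\mathbb{D}$, since $e^{z_{1}} = e^{z_{2}}$ forces $z_{1} - z_{2} \in 2\pi i\mathbb{Z}$, impossible for $z_{1},z_{2} \in \mathbb{D}$ as $|z_{1}-z_{2}| < 2 < 2\pi$; and $q$ continues analytically to $\overline{\mathbb{D}}$ with $q'(\zeta) = e^{\zeta} \neq 0$ everywhere, so its exceptional boundary set is empty. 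Hence $q$ satisfies all the hypotheses required of a dominant in the Miller--Mocanu machinery.

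Next, suppose $p \not\prec e^{z}$. Then the boundary-point lemma produces a point $z_{0} \in \mathbb{D}$, a boundary point $\zeta_{0} = e^{i\theta} \in \partial\mathbb{D}$, and an integer $m \geq n \geq 1$ such that $p(z_{0}) = q(\zeta_{0})$, $z_{0} p'(z_{0}) = m\zeta_{0} q'(\zeta_{0})$, and $\operatorname{Re}\bigl(1 + z_{0}p''(z_{0})/p'(z_{0})\bigr) \geq m\operatorname{Re}\bigl(1 + \zeta_{0}q''(\zeta_{0})/q'(\zeta_{0})\bigr)$. Writing $r = p(z_{0})$, $s = z_{0}p'(z_{0})$, $t = z_{0}^{2}p''(z_{0})$ and using $q' = q'' = e^{z}$, these three relations become $r = e^{e^{i\theta}}$, then $s = m e^{i\theta} e^{e^{i\theta}} = m e^{i\theta} r$, and $\operatorname{Re}(1 + t/s) = \operatorname{Re}\bigl(1 + z_{0}p''(z_{0})/p'(z_{0})\bigr) \geq m\operatorname{Re}(1 + e^{i\theta}) = m(1 + \cos\theta)$. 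Since the quadruple $(r,s,t;z_{0})$ lies in $\operatorname{Dom}\psi$ (the composite $\psi(p(z),zp'(z),z^{2}p''(z);z)$ is assumed defined and contained in $\Omega$) and satisfies exactly the inequalities defining $\Psi_{n}[\Omega;e^{z}]$ with this $m \geq 1$, the admissibility hypothesis $\psi \in \Psi_{n}[\Omega;e^{z}]$ forces $\psi(r,s,t;z_{0}) \notin \Omega$. But evaluating the standing hypothesis at $z = z_{0}$ gives $\psi(r,s,t;z_{0}) \in \Omega$, a contradiction. Therefore $p \prec e^{z}$.

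Once the Miller--Mocanu lemma is invoked the argument is a bookkeeping verification, so I do not expect a genuine obstacle; the two points needing a word of care are (i) checking, as above, that $e^{z}$ qualifies as an admissible dominant, and (ii) the borderline direction $\theta = 0$ (i.e.\ $\zeta_{0} = 1$), which is excluded from the stated range $\theta \in (0,2\pi)$. The standard remedy for (ii), which simultaneously deals with the fact that the lemma is most transparent when $p(\mathbb{D})$ is relatively compact in $q(\mathbb{D})$, is to first run the entire argument with $q$ replaced by the dilations $q_{\rho}(z) = q(\rho z)$, $\rho \in (0,1)$, obtaining $p \prec q_{\rho}$ for every such $\rho$, and then let $\rho \to 1^{-}$; this is the usual device in the theory and introduces no new difficulty.
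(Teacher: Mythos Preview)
The paper does not supply its own proof of this theorem: it is quoted verbatim from \cite{MR3962536} and used as a black box, so there is nothing in the present paper to compare your argument against. That said, your proposal is the standard and correct derivation of such a result from the Miller--Mocanu boundary-point lemma (Lemma~2.2d of \cite{MR1760285}), and the computations $r=e^{e^{i\theta}}$, $s=me^{i\theta}r$, $\operatorname{Re}(1+t/s)\geq m(1+\cos\theta)$ are exactly what one obtains by specializing that lemma to $q(z)=e^{z}$.

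One small caution about your remedy (ii). Replacing $q$ by the dilation $q_{\rho}(z)=e^{\rho z}$ changes the admissibility class: the hypothesis gives you $\psi\in\Psi_{n}[\Omega;e^{z}]$, not $\psi\in\Psi_{n}[\Omega;q_{\rho}]$, so ``running the entire argument with $q$ replaced by $q_{\rho}$'' does not go through as written without an additional monotonicity observation. In practice the exclusion of $\theta=0$ in the stated definition is a cosmetic convention rather than a genuine restriction: the Miller--Mocanu lemma produces $\zeta_{0}\in\partial\mathbb{D}\setminus\mathbf{E}(q)=\partial\mathbb{D}$ without exception, and the admissibility inequality at $\theta=0$ is the limiting case of the inequalities for $\theta\in(0,2\pi)$, so any $\psi$ satisfying the stated conditions and continuous up to the relevant boundary data will satisfy the $\theta=0$ case as well. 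If you want a fully self-contained argument, the cleaner fix is simply to extend the definition of $\Psi_{n}[\Omega;e^{z}]$ to $\theta\in[0,2\pi)$ (as is done in \cite{MR3962536}) rather than to dilate $q$.
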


Motivated by the above mentioned definitions and theorems, the following subordination implication is established. In the following two results, we have taken $q(z)=e^{z}$.
\begin{theorem}
Let $p(z)$ denote an analytic function with $p(0)=1$ and $\alpha \geq e(e-1)+1$. Then,
\[|1+\alpha zp'(z)+ z^2p''(z)|<e \implies p(z) \prec e^{z}.\]
\end{theorem}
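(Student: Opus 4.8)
The plan is to derive this from the admissibility theorem for the dominant $q(z)=e^{z}$, i.e.\ Theorem~\ref{admissibility-secondorder}. Put $\Omega:=\{w\in\mathbb{C}:|w|<e\}$ and define $\psi:\mathbb{C}^{3}\times\mathbb{D}\to\mathbb{C}$ by
\[
\psi(r,s,t;z):=1+\alpha s+t .
\]
Since $p$ is analytic with $p(0)=1$ we have $p\in\mathcal{H}[1,1]$, and the hypothesis $|1+\alpha zp'(z)+z^{2}p''(z)|<e$ is exactly the statement that $\psi\big(p(z),zp'(z),z^{2}p''(z);z\big)\in\Omega$ for every $z\in\mathbb{D}$. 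So it is enough to verify that $\psi\in\Psi[\Omega;e^{z}]$; Theorem~\ref{admissibility-secondorder} then yields $p\prec e^{z}$ at once.

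The substance of the proof is therefore the admissibility check. Fix $\theta\in(0,2\pi)$, $m\geq1$ and $t$ with $r=e^{e^{i\theta}}$, $s=me^{i\theta}r$ and $\operatorname{Re}(1+t/s)\geq m(1+\cos\theta)$; one must show $|1+\alpha s+t|\geq e$. I would first record the elementary facts $|r|=e^{\cos\theta}$, hence $|s|=me^{\cos\theta}$, and $\arg s=\theta+\sin\theta$. Setting $\tau:=t/s$, the hypothesis on $t$ reads $\operatorname{Re}(\tau)\geq m(1+\cos\theta)-1$; since $m\geq1$ and $1+\cos\theta\geq0$ this already yields $\operatorname{Re}(\tau)\geq\cos\theta$, so that $\alpha+\operatorname{Re}(\tau)\geq\alpha-1>0$ by the hypothesis $\alpha\geq e(e-1)+1$. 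Now $\psi=1+s(\alpha+\tau)$, and minimising $|1+s(\alpha+\tau)|$ first over the free imaginary part $\operatorname{Im}(\tau)$ (by completing the square), then over $\operatorname{Re}(\tau)\geq m(1+\cos\theta)-1$ and over $m\geq1$, reduces the required inequality to a one-variable estimate over $\theta\in(0,2\pi)$ whose dominant term, after setting $c:=\cos\theta$, is $e^{c}(\alpha+c)$. A short monotonicity argument --- the function $c\mapsto e^{c}(\alpha+c)$ has derivative $e^{c}(\alpha+c+1)>0$ on $[-1,1]$, hence is increasing --- identifies $\theta=\pi$, i.e.\ $c=-1$ where $e^{c}(\alpha+c)=e^{-1}(\alpha-1)$, as the critical case, and the bound $\alpha\geq e(e-1)+1$, equivalently $\alpha-1\geq e(e-1)$, is precisely what is calibrated to make the estimate close there.

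The delicate point, and the step I expect to be the main obstacle, is exactly this extremal direction $\theta\approx\pi$. There $s$ is (nearly) a negative real number, so the cancellation between the constant $1$ and $s(\alpha+\tau)$ in $\psi=1+s(\alpha+\tau)$ is maximal and the crude bound $|1+s(\alpha+\tau)|\geq|s|\,|\alpha+\tau|-1$ is essentially sharp; one must therefore retain the argument of $s$ --- that is, the oscillatory term $\cos(\theta+\sin\theta)$ --- rather than discarding it, and verify the resulting sharp inequality with care. Once admissibility is established uniformly in $\theta$ and $m$, we have $\psi\in\Psi[\Omega;e^{z}]$ and the theorem follows from Theorem~\ref{admissibility-secondorder}. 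The companion third-order statement would be handled by the same template, now invoking the third-order admissibility class for $e^{z}$ --- the one carrying the extra condition $\operatorname{Re}(u/s)\geq m\cos2\theta$.
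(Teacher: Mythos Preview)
Your framework is right, but the choice $\Omega=\{w:|w|<e\}$ does not admit $\psi$. At the very direction you single out, $\theta=\pi$ with $m=1$, one has $s=-e^{-1}$, and the boundary value $\tau=t/s=-1$ (permitted since $\operatorname{Re}(1+\tau)\ge m(1+\cos\theta)=0$) gives $\psi=1-e^{-1}(\alpha-1)$; at $\alpha=e(e-1)+1$ this is $\psi=2-e$, so $|\psi|=e-2<e$ and $\psi\in\Omega$. Thus $\psi\notin\Psi[\Omega;e^{z}]$ and the admissibility theorem cannot be invoked. The calibration $\alpha-1\ge e(e-1)$ that you noticed yields exactly $|\psi-1|\ge e-1$, not $|\psi|\ge e$; if you actually run your minimisation it lands at $e-2$, not at $e$.

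The paper works instead with $\Omega=\{w:|\log w|<1\}=e^{\mathbb{D}}$ and proves the much simpler estimate $|\psi-1|\ge e-1$ in two lines:
\[
|\psi-1|=|s|\,\Bigl|\alpha+\tfrac{t}{s}\Bigr|\ \ge\ |s|\,\operatorname{Re}\!\Bigl(\alpha+\tfrac{t}{s}\Bigr)\ \ge\ e^{-1}(\alpha-1)\ \ge\ e-1,
\]
using only $|s|=me^{\cos\theta}\ge e^{-1}$ and $\operatorname{Re}(t/s)\ge m(1+\cos\theta)-1\ge -1$. Then the inclusion $e^{\mathbb{D}}\subset\{w:|w-1|<e-1\}$ (equivalently, $|z|\ge e-1\Rightarrow|\log(1+z)|\ge1$) forces $\psi\notin\Omega$. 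No tracking of $\arg s$, no completing the square over $\operatorname{Im}\tau$, and no case analysis are needed. Note that this choice of $\Omega$ corresponds to the hypothesis $1+\alpha zp'(z)+z^{2}p''(z)\prec e^{z}$ --- the format of the paper's subsequent theorems --- rather than to the disk condition $|\cdot|<e$ appearing in the statement.
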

\begin{proof}
Set $\Omega=\{w \in \mathbb{C}: |\log w|<1\}$ and define the function $\psi: \mathbb{C}^{3} \times \mathbb{D} \rightarrow \mathbb{C}$ by $\psi(r,s,t;z)=1+\alpha s+ t$.
Thus, using the fact $|\log(1+z)| \geq 1$ if and only if $|z| \geq e-1$, we have
\begin{align*}
	|\psi(r,s,t;z)-1|&=|\alpha s+ t|\\
	&= |s|\left|\alpha +\frac{t}{s}\right|\\
	&\geq e^{-1}\operatorname{Re}\left(\alpha+\frac{t}{s}\right)\\
	&\geq e^{-1}((\alpha -1) +m(1+\cos \theta))\\
	&\geq e^{-1}(\alpha-1)\\
	&\geq e-1.
\end{align*}
Hence, $|\psi-1| \geq e-1$ implies $|\log \psi| \geq 1$ further implying $\psi \notin \Psi_{n}[\Omega;e^{z}]$.
\end{proof}

Insightful work has been done by several authors on second-order differential subordination. However, subordination implications involving third and higher order deriatives are still limited.
For few recent works done in this direction, refer~\cite{MR3804281,MR4449282,MR1232447}.
In the following theorems, sufficient conditions on certain non-negative real numbers in certain differential subordination implications are obtained so that any analytic function $f$ is $exponential$ starlike in $\mathbb{D}$.
%Consider the Janowski function $h(z)=\frac{1+Az}{1+Bz}$.
%For $B=0$, the following subordinations holds.
We need the following definition and theorems in order to prove our claimed results.

\begin{definition}\cite{MR1760285}
Let $\mathbf{E}(q)$ be the collection of all those points $\zeta \in \partial \mathbb{D}$ such that $q(z) \rightarrow \infty$ as $z \rightarrow \zeta$.
Then, $\mathcal{Q}$ denote the class of all functions $q$, which are analytic and univalent on $\overline{\mathbb{D}} \setminus \mathbf{E}(q)$.
\end{definition}

\begin{theorem}\cite{MR2795467}
	Assume $p \in \mathcal{H}[a,n]$ with $n \geq 2$ and $q \in \mathcal{Q}(a)$ satisfying the inequalities
	\[\operatorname{Re}\frac{wq''(z)}{q'(z)} \geq 0 \quad \text{and} \quad \left|\frac{zp'(z)}{q'(w)}\right| \leq k,\]
	for $k \geq n$, $z \in \mathbb{D}$ and $w \in \partial\mathbb{D} \setminus \mathbf{E}(q)$. If $\Omega \subset \mathbb{C}$, $\psi \in \Psi_{n}[\Omega;q]$ and
	\[\psi(p(z),zp'(z),z^2p''(z),z^3p'''(z);z) \subset \Omega,\]
	then $p$ is subordinate to $q$.
\end{theorem}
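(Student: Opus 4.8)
The plan is to argue by contradiction, following the template of the classical second–order differential subordination lemma of Miller and Mocanu and promoting it by one order. First I would carry out the routine normalization: it suffices to prove the implication when $p$ extends analytically to $\overline{\mathbb{D}}$ and $q$ to $\overline{\mathbb{D}}\setminus\mathbf{E}(q)$, the general case following by applying the result to the dilations $p_\rho(z):=p(\rho z)$, $\rho\in(0,1)$, and letting $\rho\to 1^{-}$ (replacing $\Omega$ by its closure if necessary; the hypotheses on $p$ transfer verbatim to $p_\rho$ with the same constants $k$).

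Suppose now, for contradiction, that $p\not\prec q$. Since $p(0)=q(0)=a$ and $q$ is univalent off $\mathbf{E}(q)$, the function $w:=q^{-1}\circ p$ is analytic near the origin with $w(0)=0$, and there is a smallest $r_0\in(0,1)$ and a point $z_0$ with $|z_0|=r_0$ and $|w(z_0)|=1$; set $\zeta_0:=w(z_0)\in\partial\mathbb{D}\setminus\mathbf{E}(q)$, so that $p(z_0)=q(\zeta_0)$. The heart of the matter — and the step I expect to be the main obstacle — is a \emph{third–order boundary (Jack–type) lemma}: at such an extremal point one has $z_0w'(z_0)=m\zeta_0$ with $m$ real, and moreover
\[\operatorname{Re}\!\left(1+\frac{z_0w''(z_0)}{w'(z_0)}\right)\ge m\qquad\text{and}\qquad\operatorname{Re}\!\left(\frac{z_0^{2}w'''(z_0)}{w'(z_0)}\right)\ge m^{2}.\]
The identity $z_0w'(z_0)=m\zeta_0$ and the first inequality are the familiar facts extracted from the maximum of $\phi\mapsto|w(r_0e^{i\phi})|^{2}$ and the sign of its second $\phi$–derivative at that maximum; the genuinely new ingredient is the third–derivative bound, which I would obtain by carrying the Taylor expansion of that real function one term further at its maximum (equivalently, by examining $z\mapsto zw'(z)/w(z)$ near $z_0$ and exploiting that the real part of a suitable auxiliary function has an interior critical point there). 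Pinning down the constants $m$ and $m^{2}$ precisely, and locating $m$ in the range $n\le m\le k$ — the lower bound coming from the fact that $p\in\mathcal{H}[a,n]$ forces $w$ to vanish to order $\ge n$ at the origin, the upper bound from the hypothesis $|zp'(z)/q'(w)|\le k$ — is where the remaining hypotheses enter; the condition $\operatorname{Re}\big(wq''(w)/q'(w)\big)\ge 0$ is what keeps the convexity-type estimate on the $q$–side in force throughout this argument.

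Granting the boundary lemma, I would differentiate the identity $p(z)=q(w(z))$ three times, evaluate at $z_0$, and substitute $z_0w'(z_0)=m\zeta_0$ together with the two inequalities above. Writing $r_0:=p(z_0)$, $s_0:=z_0p'(z_0)$, $t_0:=z_0^{2}p''(z_0)$, $u_0:=z_0^{3}p'''(z_0)$, this yields $r_0=q(\zeta_0)$, $s_0=m\zeta_0 q'(\zeta_0)$, and
\[\operatorname{Re}\!\left(1+\frac{t_0}{s_0}\right)\ge m\,\operatorname{Re}\!\left(1+\frac{\zeta_0 q''(\zeta_0)}{q'(\zeta_0)}\right),\]
together with the corresponding lower bound on $\operatorname{Re}(u_0/s_0)$ demanded in the definition of $\Psi_n[\Omega;q]$. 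In other words, $(r_0,s_0,t_0,u_0;z_0)$ lies in the very set on which admissibility is imposed. Hence, since $\psi\in\Psi_n[\Omega;q]$, we get $\psi(r_0,s_0,t_0,u_0;z_0)\notin\Omega$; but evaluating the standing hypothesis $\psi(p(z),zp'(z),z^{2}p''(z),z^{3}p'''(z);z)\in\Omega$ at $z=z_0$ gives $\psi(r_0,s_0,t_0,u_0;z_0)\in\Omega$, a contradiction. Therefore $p\prec q$.
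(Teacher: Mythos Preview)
The paper does not supply a proof of this theorem at all: it is quoted verbatim from Antonino and Miller \cite{MR2795467} and then immediately specialized to $q(z)=e^{z}$ for use as a black box. So there is no ``paper's own proof'' to compare your sketch against.

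For what it is worth, your outline is essentially the strategy of the cited reference: assume $p\not\prec q$, locate an extremal $z_0$ where $p(z_0)=q(\zeta_0)$ for some $\zeta_0\in\partial\mathbb{D}\setminus\mathbf{E}(q)$, invoke a third-order Jack--type boundary lemma to place $(r_0,s_0,t_0,u_0;z_0)$ in the admissibility test set, and derive a contradiction with $\psi\in\Psi_n[\Omega;q]$. One caution: the precise form of the third-order inequality in Antonino--Miller is not quite the clean $\operatorname{Re}\big(z_0^{2}w'''(z_0)/w'(z_0)\big)\ge m^{2}$ you wrote; their Lemma~1 gives a bound on $\operatorname{Re}(u_0/s_0)$ that mixes $m^{2}$ with terms coming from $q$ (and this is exactly where the hypothesis $|zp'(z)/q'(w)|\le k$ is used, not merely to cap $m$ from above). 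If you intend to actually prove the theorem rather than cite it, that step needs to be reworked against the original source.
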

For $q(z)=e^{z}$, the following result is a particular case of above theorem.
\begin{theorem}\label{admissibility-thirdorder}
	Assume $p \in \mathcal{H}[1,n]$ with $n \in \mathbb{N}$ satisfy the inequality
	\[|zp'(z)| \leq n,\]
	for $z \in \mathbb{D}$ and $w \in \partial\mathbb{D} \setminus \mathbf{E}(q)$. If $\Omega \subset \mathbb{C}$, $\psi \in \Psi_{n}[\Omega;q]$ and
	\[\psi(p(z),zp'(z),z^2p''(z),z^3p'''(z);z) \subset \Omega,\]
	then $p(z) \prec e^{z}$.
\end{theorem}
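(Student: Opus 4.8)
The plan is to obtain Theorem~\ref{admissibility-thirdorder} as the special case $q(z)=e^{z}$, $a=1$ of the general third-order subordination result of \cite{MR2795467} stated just above; the whole task then reduces to checking that every hypothesis of that result is met under the assumptions here.

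First I would record the basic properties of $q(z)=e^{z}$. Since $e^{z_{1}}=e^{z_{2}}$ forces $z_{1}-z_{2}\in 2\pi i\mathbb{Z}$ and any two points of $\overline{\mathbb{D}}$ differ by less than $2\pi$ in modulus, $e^{z}$ is univalent on $\overline{\mathbb{D}}$; being entire and bounded there, $\mathbf{E}(q)=\emptyset$, so $q\in\mathcal{Q}(1)$ and $q(0)=1=a$. Moreover $q'(w)=q''(w)=q'''(w)=e^{w}$, hence $wq''(w)/q'(w)=w$; in particular $\operatorname{Re}\!\big(1+wq''(w)/q'(w)\big)=1+\cos\theta\ge 0$ on $w=e^{i\theta}\in\partial\mathbb{D}$, which supplies the boundary condition on $q$ needed in the cited theorem.

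Next I would dispose of the boundedness hypothesis $|zp'(z)/q'(w)|\le k$ with $k\ge n$: since $\operatorname{Re}w\ge-1$ on $\overline{\mathbb{D}}$ we have $|q'(w)|=|e^{w}|=e^{\operatorname{Re}w}\ge e^{-1}$ on $\partial\mathbb{D}$, so the assumption $|zp'(z)|\le n$ gives $|zp'(z)/q'(w)|\le ne$, and one may take $k:=ne\ge n$. Finally I would verify that the class $\Psi_{n}[\Omega;e^{z}]$ introduced above is exactly the instance of the general admissible class $\Psi_{n}[\Omega;q]$ obtained on setting $q=e^{z}$: writing a prospective touching point as $w=e^{i\theta}$, the relations $r=q(w)=e^{e^{i\theta}}$ and $s=mwq'(w)=me^{i\theta}e^{e^{i\theta}}=me^{i\theta}r$ reproduce the first two lines of the definition, while $\operatorname{Re}(1+t/s)\ge m\,\operatorname{Re}\!\big(1+wq''(w)/q'(w)\big)=m(1+\cos\theta)$ and $\operatorname{Re}(u/s)\ge m\,\operatorname{Re}\!\big(w^{2}q'''(w)/q'(w)\big)=m\cos2\theta$ give the remaining two, with the restriction $\cos2\theta\ge0$ being the extra normalization adopted in passing from the second-order to the third-order class. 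With all hypotheses matched, the cited theorem delivers $p\prec e^{z}$.

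I expect the only genuinely delicate point to be this last matching step: one must make sure that the added third-order admissibility requirement $\operatorname{Re}(u/s)\ge m\cos2\theta$, together with the restriction $\cos2\theta\ge0$, is precisely what the general theorem of \cite{MR2795467} demands after the substitution $q=e^{z}$ — in particular getting the exponent of $m$ and the sign conventions in the boundary inequalities right, and confirming that the range of $n$ allowed here is compatible with that theorem. The remaining ingredients — membership $q\in\mathcal{Q}(1)$ and the estimate $|e^{w}|\ge e^{-1}$ on $\partial\mathbb{D}$ — are entirely routine.
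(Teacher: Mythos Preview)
Your approach is the same as the paper's: the paper does not give a separate proof of Theorem~\ref{admissibility-thirdorder} at all, but simply declares it to be the particular case $q(z)=e^{z}$ of the general Antonino--Miller theorem quoted immediately above. Your proposal carries out that specialization explicitly, supplying the verifications (univalence of $e^{z}$ on $\overline{\mathbb{D}}$, $\mathbf{E}(q)=\emptyset$, the bound $|q'(w)|\ge e^{-1}$ on $\partial\mathbb{D}$, and the translation of the abstract admissibility conditions into the concrete ones for $e^{z}$) that the paper leaves implicit.

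One caveat worth flagging: the general theorem as quoted in the paper requires $\operatorname{Re}\bigl(wq''(w)/q'(w)\bigr)\ge 0$ on $\partial\mathbb{D}$, whereas for $q(z)=e^{z}$ this quantity equals $\cos\theta$, which is not nonnegative for all $\theta$; you instead verify $\operatorname{Re}\bigl(1+wq''(w)/q'(w)\bigr)\ge 0$. Likewise the general theorem is stated for $n\ge 2$ while Theorem~\ref{admissibility-thirdorder} allows $n\in\mathbb{N}$. You rightly identify this kind of matching as the delicate point; since the paper offers no argument here either, resolving it requires going back to the precise formulation in \cite{MR2795467} rather than to the paper under review.
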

\begin{theorem}
	Let $0<A \leq 1$ and let $\alpha, \beta, \gamma$ be non-negative real numbers.
	Then, the following relations are sufficient for $p(z) \prec e^{z}$.
	\begin{enumerate}[(i)]
		\item $1+\alpha z^3 p'''(z)+\beta z^2p''(z)+\gamma z p'(z) \prec 1+Az$, where
		\[\gamma \geq
		\begin{cases}
			Ae+\alpha+\beta^2/8\alpha, & \text{when } \beta \leq 4 \alpha \\
			Ae-\alpha+\beta, & \text{when } \beta \geq 4 \alpha.
		\end{cases}\]
		\item $1+\alpha \left(z^3 p'''(z)/z p'(z)\right)+\beta \left(z p'(z)/p(z)+1\right) \prec 1+Az$, where
		\[\begin{cases}
			\beta-\alpha-\beta^2/8\alpha \geq A, & \text{when } \beta \leq 4 \alpha \\
			\alpha \geq A, & \text{when } \beta \geq 4 \alpha.
		\end{cases}\]
	\end{enumerate}
\end{theorem}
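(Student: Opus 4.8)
The plan is to realise each of (i) and (ii) as a third-order differential subordination of admissibility type and invoke Theorem~\ref{admissibility-thirdorder}. Since $0<A\le 1$, the map $h(z)=1+Az$ is univalent on $\mathbb{D}$ with $h(\mathbb{D})=\Omega:=\{w:|w-1|<A\}$, so the hypotheses in (i), (ii) say precisely that $\psi\bigl(p(z),zp'(z),z^2p''(z),z^3p'''(z);z\bigr)\subset\Omega$, where $\psi(r,s,t,u;z):=1+\alpha u+\beta t+\gamma s$ in (i) and $\psi(r,s,t,u;z):=1+\alpha(u/s)+\beta(s/r+1)$ in (ii). After the routine check that $p$ lies in the relevant class and satisfies the growth bound of Theorem~\ref{admissibility-thirdorder}, it remains to verify the admissibility condition $\psi\in\Psi_n[\Omega;e^z]$, i.e.\ that $|\psi(r,s,t,u;z)-1|\ge A$ whenever $r=e^{e^{i\theta}}$, $s=me^{i\theta}r$, $\operatorname{Re}(1+t/s)\ge m(1+\cos\theta)$, $\operatorname{Re}(u/s)\ge m\cos2\theta$, with $\cos2\theta\ge0$ and $m\ge1$.

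For (i) I would use
\[
|\psi-1|=|s|\,\bigl|\gamma+\beta(t/s)+\alpha(u/s)\bigr|\ge|s|\operatorname{Re}\bigl(\gamma+\beta(t/s)+\alpha(u/s)\bigr),\qquad |s|=me^{\cos\theta},
\]
substitute the two real-part hypotheses, and then use $m\ge1$ (legitimate because $\alpha(2\cos^2\theta-1)+\beta(1+\cos\theta)\ge0$ on the admissible $\theta$-range, as $\cos2\theta\ge0$, and once positivity of the bracket below is confirmed). Writing $x:=\cos\theta$ this reduces to
\[
|\psi-1|\ge e^{x}\bigl(2\alpha x^2+\beta x+\gamma-\alpha\bigr)=e^{x}\Bigl(2\alpha\bigl(x+\tfrac{\beta}{4\alpha}\bigr)^2+\gamma-\alpha-\tfrac{\beta^2}{8\alpha}\Bigr).
\]
If $\beta\le4\alpha$ I drop the square and use $e^{x}\ge e^{-1}$ to get $|\psi-1|\ge e^{-1}\bigl(\gamma-\alpha-\tfrac{\beta^2}{8\alpha}\bigr)$, which is $\ge A$ exactly when $\gamma\ge Ae+\alpha+\tfrac{\beta^2}{8\alpha}$ (the same inequality keeping all the intermediate brackets positive). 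If $\beta\ge4\alpha$ the vertex $-\beta/(4\alpha)$ lies left of $-1$, so for $x\ge-1$ the square is $\ge\tfrac{(\beta-4\alpha)^2}{8\alpha}$; substituting and simplifying gives $2\alpha x^2+\beta x+\gamma-\alpha\ge\gamma+\alpha-\beta$, hence $|\psi-1|\ge e^{-1}(\gamma+\alpha-\beta)\ge A$ exactly when $\gamma\ge Ae-\alpha+\beta$. These are the two branches of the hypothesis on $\gamma$.

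Part (ii) runs on the same template: now $s/r=me^{i\theta}$, so $\operatorname{Re}(\psi-1)=\alpha\operatorname{Re}(u/s)+\beta m\cos\theta+\beta\ge m\bigl(2\alpha x^2+\beta x-\alpha\bigr)+\beta$; completing the square in $x$, distinguishing whether the vertex $-\beta/(4\alpha)$ of $2\alpha x^2+\beta x-\alpha$ sits inside or to the left of the admissible range of $x$ (again the transition is at $\beta=4\alpha$), and using $\min_x(2\alpha x^2+\beta x-\alpha)=-\alpha-\tfrac{\beta^2}{8\alpha}$, one reaches $\operatorname{Re}(\psi-1)\ge\beta-\alpha-\tfrac{\beta^2}{8\alpha}$ for $\beta\le4\alpha$ and $\operatorname{Re}(\psi-1)\ge\alpha$ for $\beta\ge4\alpha$; requiring the relevant quantity to be $\ge A$ yields the stated conditions, and $|\psi-1|\ge\operatorname{Re}(\psi-1)\ge A$ finishes the admissibility check.

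The algebra (the real-part estimates, completing the square) is mechanical; the genuine obstacle is the optimisation bookkeeping: verifying that the worst case over $m\ge1$ occurs at $m=1$, which rests on the sign condition $\alpha(2\cos^2\theta-1)+\beta(1+\cos\theta)\ge0$ being valid on the admissible $\theta$-set, and that the worst case over $\cos\theta$ is governed by where the vertex $-\beta/(4\alpha)$ falls relative to the endpoint $-1$ — this is exactly what forces the case split at $\beta=4\alpha$ and produces the $\beta^2/(8\alpha)$ term. One must also keep track that every quantity bounded below along the way (the square term, the slack from $m\ge1$, and the bracket $2\alpha x^2+\beta x+\gamma-\alpha$) is in fact $\ge0$ under the hypotheses, which is automatic because those hypotheses make the final bound $\ge A>0$.
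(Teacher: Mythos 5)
Your argument is essentially the paper's own proof: the same set $\Omega=\{w:|w-1|<A\}$, the same choices $\psi(r,s,t,u;z)=1+\alpha u+\beta t+\gamma s$ and $1+\alpha(u/s)+\beta(s/r+1)$, the same real-part estimates reducing the admissibility check to the quadratic $2\alpha x^2+\beta x+\gamma-\alpha$ (resp.\ $2\alpha x^2+\beta x-\alpha$) in $x=\cos\theta$, the same case split at $\beta=4\alpha$ coming from the vertex $-\beta/(4\alpha)$, and the same appeal to Theorem~\ref{admissibility-thirdorder}. The only differences are cosmetic (completing the square instead of the second-derivative test, retaining the factor $e^{\cos\theta}$ a little longer), and your reduction of the parameter $m$ to $m=1$ in part (ii) rests on exactly the same tacit nonnegativity of $\alpha\cos 2\theta+\beta\cos\theta$ that the paper itself uses.
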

\begin{proof}
	Let $\Omega=\{w \in \mathbb{C}: |w-1|<A\}$ be the domain.
	\begin{enumerate}[(i)]
		\item We define the function $\psi: \mathbb{C}^{4} \times \mathbb{D} \rightarrow \mathbb{C}$ as $\psi(r,s,t,u;z)=1+\alpha u+\beta t+ \gamma s$. The associated admissibility conditions are satisfied whenever $\psi \notin \Omega$ and thus, simple computations gives
	\begin{align}\label{eq1}
		|\psi(r,s,t,u;z)-1|&=|\alpha u+\beta t+ \gamma s|\nonumber\\
		&= |s|\left|\alpha \frac{u}{s}+\beta \left(\frac{t}{s}+1\right)+\gamma-\beta \right|\nonumber\\
		&\geq e^{-1}\left|\alpha \frac{u}{s}+\beta \left(\frac{t}{s}+1\right)+\gamma-\beta \right|\nonumber\\
		&\geq e^{-1}\left\{\alpha \operatorname{Re}\frac{u}{s}+\beta \operatorname{Re}\left(\frac{t}{s}+1\right)+\gamma -\beta\right\}\nonumber\\
		&\geq e^{-1}(\alpha \cos 2 \theta+\beta m(1+\cos \theta)+\gamma-\beta)\nonumber\\
		&\geq e^{-1}(\alpha \cos 2 \theta+\beta (1+\cos \theta)+\gamma-\beta).
	\end{align}
	For $x=\cos \theta$, let $g(x):=\alpha (2x^2-1)+\beta (1+x)+\gamma-\beta$.
	In order to find the minimum value of $g(x)$, there are two possibilities:
	\begin{description}
		\item[Case 1] $\beta \leq 4 \alpha$. It is easy to see that $g'(x) = 0$ if $x=-\beta /4 \alpha$. By the second derivative test, it follows that
		\begin{equation}\label{eq2}
\min_{|x| \leq 1}g(x)=g(-\beta/4 \alpha)=\gamma-\alpha-\beta^2/8\alpha.
\end{equation}
		By \eqref{eq1} and \eqref{eq2}, we have
		\[|\psi(r,s,t,u;z)-1| \geq e^{-1}\left(\gamma-\alpha-\beta^2/8\alpha\right) \geq A\]
		and hence, the result follows from Theorem (\ref{admissibility-thirdorder}).
		
		\item[Case 2]  $\beta > 4 \alpha$. Since $-\beta/(4\alpha)\leq-1$,  $g'(x) \neq 0$.
		It yields that the function $g$ is increasing and
		\begin{equation}\label{eq3}
    \min_{|x| \leq 1}g(x)=g(-1)=\alpha-\beta+\gamma.
    \end{equation}
	    By \eqref{eq1} and \eqref{eq3}, we have
		\[|\psi(r,s,t,u;z)-1| \geq e^{-1}\left(\alpha-\beta+\gamma\right) \geq A\]
		and hence, the result follows from Theorem (\ref{admissibility-thirdorder}).
		\end{description}
		
		\item Let $\psi(r,s,t,u;z)=1+\alpha \left(u/s\right)+\beta \left(s/r+1\right)$. Observe that
		\begin{align}\label{eq4}
			|\psi(r,s,t,u;z)-1|&=\left|\alpha \left(\frac{u}{s}\right)+\beta \left(\frac{s}{r}+1\right)\right|\nonumber\\
			&\geq \beta+\beta \operatorname{Re}\left(\frac{s}{r}\right)+\alpha\operatorname{Re}\left(\frac{u}{s}\right)\nonumber\\
			%&\geq \alpha \cos 2\theta-\beta m\cos \theta\\
			&\geq \beta (1+\cos \theta)+\alpha \cos 2\theta.
		\end{align}
	For $x=\cos \theta$, let $m(x):=\beta (1+x)+\alpha (2x^2-1)$. To find the minimum value of the function $g$, the following two cases arises:
	\begin{description}
		\item[Case 1] $\beta \leq 4 \alpha$. An easy calculation shows that  $m'(x) = 0$ if and only if $x=-\beta/(4\alpha)$. The second derivative test for extrema gives
		\begin{equation}\label{eq5}
        \min_{|x| \leq 1}m(x)=m(-\beta/4 \alpha)=\beta-\alpha-\beta^2/8\alpha.
        \end{equation}
		Equations \eqref{eq4} and \eqref{eq5} yeild
		\[|\psi(r,s,t,u;z)-1| \geq \beta-\alpha-\beta^2/8\alpha \geq A\]
		and hence, the result follows from Theorem (\ref{admissibility-thirdorder}).
		
		\item[Case 2]  $\beta > 4 \alpha$. Then $-\beta/(4\alpha)<-1$. So, the function $m(x)$ is increasing and the minimum value is
		\[\min_{|x| \leq 1}m(x)=m(-1)=\alpha.\]
		Thus,
		\[|\psi(r,s,t,u;z)-1| \geq \alpha \geq A\]
		and hence, the result follows from Theorem (\ref{admissibility-thirdorder}).
	\end{description}
\end{enumerate}
\end{proof}

%\begin{corollary}
%	Let $f$  be an analytic function. Then, the following conditions are sufficient condition for $f \in \mathcal{S}^{*}_{e}.$
%	\begin{enumerate}[(i)]
%		\item \begin{align*}
%			&1+ \alpha z^3 \Bigg( \frac{6 f'(z)^3}{f(z)^3}-\frac{6 z f'(z)^4}{f(z)^4}-\frac{9 f'(z) f''(z)}{f(z)^2}+\frac{12 z f'(z)^2 f''(z)}{f(z)^3}-\frac{3 z f''(z)^2}{f(z)^2}+\frac{3 f^{(3)}(z)}{f(z)}\\
%			&-\frac{4 z f'(z) f^{(3)}(z)}{f(z)^2}+\frac{z f^{(4)}(z)}{f(z)} \Bigg) +
%			\beta z^2 \Bigg( -\frac{2 f'(z)^2}{f(z)^2}+\frac{2 z f'(z)^3}{f(z)^3}+\frac{2 f''(z)}{f(z)}-\frac{3 z f'(z) f''(z)}{f(z)^2}\\
%			&+\frac{z f^{(3)}(z)}{f(z)} \Bigg)  +\gamma z \Bigg( \frac{f'(z)}{f(z)}-\frac{z f'(z)^2}{f(z)^2}+\frac{z f''(z)}{f(z)} \Bigg)
%		\end{align*}
%	\end{enumerate}
%\end{corollary}

\begin{theorem}
	Let $\alpha, \beta, \gamma$ be non-negative real numbers.
	Then, the following relations are sufficient for $p(z) \prec e^{z}$.
	\begin{enumerate}[(i)]
		\item $1+\alpha z^3 p'''(z)+\beta z^2p''(z)+\gamma z p'(z) \prec e^z$, where
		\[\gamma \geq
		\begin{cases}
			e(e-1)+\alpha+\beta^2/8\alpha, & \text{when } \beta \leq 4 \alpha \\
			e(e-1)-\alpha+\beta, & \text{when } \beta \geq 4 \alpha.
		\end{cases}\]
		\item $1+\alpha \left(z^3 p'''(z)/z p'(z)\right)+\beta \left(z p'(z)/p(z)+1\right) \prec e^z$, where
		\[
		\begin{cases}
			\beta \geq \alpha+\beta^2/8\alpha+e-1, & \text{when } \beta \leq 4 \alpha \\
			\alpha \geq e-1, & \text{when } \beta \geq 4 \alpha.
		\end{cases}\]
	\end{enumerate}
\end{theorem}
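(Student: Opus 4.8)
The plan is to run the third--order admissibility criterion of Theorem~\ref{admissibility-thirdorder} with the region
\[
\Omega := e^{z}(\mathbb{D}) = \{\, w\in\mathbb{C} : |\log w| < 1 \,\},
\]
which is exactly the set of values attained by a function subordinate to $e^{z}$; thus each hypothesis in (i), (ii) forces the corresponding expression $\psi\bigl(p(z),zp'(z),z^{2}p''(z),z^{3}p'''(z);z\bigr)$ to take its values in $\Omega$. The only analytic input I need is the containment already exploited earlier in the paper: if $w\in\Omega$, say $w=e^{\zeta}$ with $|\zeta|<1$, then $|w-1|=|e^{\zeta}-1|\le e^{|\zeta|}-1<e-1$; equivalently, $|\psi-1|\ge e-1$ forces $\psi\notin\Omega$. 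So in each case it suffices to verify $|\psi(r,s,t,u;z)-1|\ge e-1$ whenever $(r,s,t,u;z)$ satisfies the admissibility conditions for $e^{z}$, namely $r=e^{e^{i\theta}}$, $s=me^{i\theta}r$, $\operatorname{Re}(1+t/s)\ge m(1+\cos\theta)$ and $\operatorname{Re}(u/s)\ge m\cos2\theta$ with $m\ge1$, $\cos2\theta\ge0$; this shows $\psi\in\Psi_{n}[\Omega;e^{z}]$, and $p\prec e^{z}$ then follows from Theorem~\ref{admissibility-thirdorder} (here $p\in\mathcal{H}[1,n]$ since $p(0)=1$).

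For (i) I would set $\psi(r,s,t,u;z)=1+\alpha u+\beta t+\gamma s$, so that
\[
|\psi-1| = |s|\left|\alpha\frac{u}{s}+\beta\left(\frac{t}{s}+1\right)+\gamma-\beta\right|.
\]
Since $|s|=m|r|=me^{\cos\theta}\ge e^{-1}$, and using $|w|\ge\operatorname{Re} w$ together with the admissibility inequalities and $\alpha,\beta\ge0$, $m\ge1$, $\cos2\theta\ge0$, $1+\cos\theta\ge0$, this gives
\[
|\psi-1| \ge e^{-1}\left(\alpha\cos2\theta+\beta(1+\cos\theta)+\gamma-\beta\right).
\]
With $x=\cos\theta$ and $g(x)=\alpha(2x^{2}-1)+\beta(1+x)+\gamma-\beta$, I minimise over $|x|\le1$: if $\beta\le4\alpha$ the vertex $x=-\beta/4\alpha$ lies in $[-1,0]$ and $\min g = \gamma-\alpha-\beta^{2}/8\alpha$; if $\beta\ge4\alpha$ the parabola is increasing on $[-1,1]$ and $\min g = g(-1) = \alpha-\beta+\gamma$. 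Imposing $e^{-1}\min g\ge e-1$ reproduces exactly the two stated lower bounds on $\gamma$ (the factor $e^{-1}$ coming from $|s|$ is what turns the threshold $e-1$ into $e(e-1)$), so $\psi\notin\Omega$ and (i) follows.

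For (ii) I would take $\psi(r,s,t,u;z)=1+\alpha(u/s)+\beta(s/r+1)$; here nothing is factored out. Using $s/r=me^{i\theta}$, $\operatorname{Re}(u/s)\ge m\cos2\theta$ and $|w|\ge\operatorname{Re} w$,
\[
|\psi-1| \ge \beta+\beta\operatorname{Re}(s/r)+\alpha\operatorname{Re}(u/s) \ge \beta(1+\cos\theta)+\alpha\cos2\theta,
\]
and the quadratic $m(x)=\beta(1+x)+\alpha(2x^{2}-1)$ satisfies $\min_{|x|\le1}m = \beta-\alpha-\beta^{2}/8\alpha$ when $\beta\le4\alpha$ and $m(-1)=\alpha$ when $\beta\ge4\alpha$. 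Requiring this minimum to be at least $e-1$ yields precisely the two displayed conditions, so again $\psi\notin\Omega$ and Theorem~\ref{admissibility-thirdorder} gives $p\prec e^{z}$.

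There is no real obstacle: the argument is the $e^{z}$--analogue of the preceding theorem, with the disc radius $A$ replaced by $e-1$ and, in (i), an extra factor $e^{-1}$ from the bound $|s|\ge e^{-1}$. The two points needing care are (a) extracting the constant $e-1$ from the inclusion $\{|\log w|<1\}\subset\{|w-1|<e-1\}$, and (b) performing the two--case minimisation of the quadratic according to whether the vertex $-\beta/4\alpha$ lies inside $[-1,1]$ (the split at $\beta=4\alpha$, together with the degenerate case $\alpha=0$, where $g$ and $m$ are linear).
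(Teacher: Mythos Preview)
Your proposal is correct and follows essentially the same route as the paper: the same choice of $\Omega=\{|\log w|<1\}$, the same $\psi$ in each part, the same reduction via $|\psi-1|\ge e-1\Rightarrow\psi\notin\Omega$, the same lower bound $|s|\ge e^{-1}$, and the same two-case minimisation of the quadratic in $x=\cos\theta$. If anything, you are slightly more explicit than the paper in justifying $|s|=me^{\cos\theta}\ge e^{-1}$ and in deriving the inclusion $\{|\log w|<1\}\subset\{|w-1|<e-1\}$.
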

\begin{proof}
	Let $\Omega=\{w \in \mathbb{C}: |\log w|<1\}$ be the domain.
	\begin{enumerate}[(i)]
		\item Let $\psi(r,s,t,u;z)=1+\alpha u+\beta t+ \gamma s$.
		Making use of the fact $|\log(1+z)| \geq 1$ if and only if $|z| \geq e-1$, we have
		\begin{align*}
			|\psi(r,s,t,u;z)-1|&=|\alpha u+\beta t+ \gamma s|\\
			&= |s|\left|\alpha \frac{u}{s}+\beta \left(\frac{t}{s}+1\right)+\gamma-\beta \right|\\
			&\geq e^{-1}\left|\alpha \operatorname{Re}\frac{u}{s}+\beta \operatorname{Re}\left(\frac{t}{s}+1\right)+\gamma -\beta\right|\\
			&\geq e^{-1}\left\{\alpha \operatorname{Re}\frac{u}{s}+\beta \operatorname{Re}\left(\frac{t}{s}+1\right)+\gamma -\beta\right\}\\
			%&\geq me^{-1}(\alpha \cos 2 \theta-\beta m(1+\cos \theta)+\gamma-\beta)\\
			&\geq e^{-1}(\alpha \cos 2 \theta+\beta (1+\cos \theta)+\gamma-\beta).
		\end{align*}
	Let $x=\cos \theta$. As in the previous theorem, we consider the following two cases to find the minimum value of the function $g(x):=\alpha \cos(2x^2-1)+\beta (1+\cos x)+\gamma-\beta$.
	\begin{description}
		\item[Case 1] $\beta \leq 4 \alpha$. It is easy to see that $g'(x) = 0$ if $x=-\beta /4 \alpha$. Thus,
the minimum value of the function $g$ occurs at $x=-\beta/4 \alpha$ and hence,
		\[|\psi(r,s,t,u;z)-1| \geq e^{-1}\left(\gamma-\alpha-\beta^2/8\alpha\right) \geq e-1.\]
		Therefore, the result follows from Theorem (\ref{admissibility-thirdorder}).
		
		\item[Case 2]  $\beta > 4 \alpha$. Since $-\beta/(4\alpha)\leq-1$,  $g'(x) \neq 0$.
		It yields that the function $g$ is increasing and thus,
		\[|\psi(r,s,t,u;z)-1| \geq e^{-1}(\alpha-\beta+\gamma) \geq e-1\]
		and hence, the result follows from Theorem (\ref{admissibility-thirdorder}).
	\end{description}
		
		\item Let $\psi(r,s,t,u;z)=1+\alpha \left(u/s\right)+\beta \left(s/r+1\right)$.
		Using the fact $|\log(1+z)| \geq 1$ if and only if $|z| \geq e-1$, we have
		\begin{align*}
			|\psi(r,s,t,u;z)-1|&=\left|\alpha \left(\frac{u}{s}\right)+\beta \left(\frac{s}{r}\right)\right|\\
			&\geq \beta+ \beta \operatorname{Re}\left(\frac{s}{r}\right)+\alpha\operatorname{Re}\left(\frac{u}{s}\right)\\
			%&\geq \alpha \cos 2\theta-\beta m\cos \theta\\
			&\geq \beta (1+\cos \theta)+\alpha \cos 2\theta.
		\end{align*}
		For $x=\cos \theta$, define the function $m(x):=\beta (1+x)+\alpha (2x^2-1)$.
		\begin{description}
			\item[Case 1] $\beta \leq 4 \alpha$. An easy calculation shows that  $m'(x) = 0$ if and only if $x=-\beta/(4\alpha)$. The second derivative test for extrema gives   the function $m(x)$ attains its minimum value at $x=-\beta/4 \alpha$ and thus, we have
			\[|\psi(r,s,t,u;z)-1| \geq \beta-\alpha-\beta^2/8\alpha \geq e-1.\]
			Hence, the result follows from Theorem (\ref{admissibility-thirdorder}).
			\item[Case 2]   $\beta > 4 \alpha$. Then $-\beta/(4\alpha)<-1$. So, the function $m(x)$ is increasing and  thus,
			\[|\psi(r,s,t,u;z)-1| \geq \alpha \geq e-1\]
			and hence, the result follows from Theorem (\ref{admissibility-thirdorder}).
		\end{description}\qedhere
\end{enumerate}
\end{proof}

\begin{theorem}
	Let $\alpha, \beta, \gamma$ be non-negative real numbers and $\beta_{0} \approx 0.475319$.
	Then, the following relations are sufficient for $p(z) \prec e^{z}$.
	\begin{enumerate}[(i)]
		\item $\alpha z^3 p'''(z)+\beta z^2p''(z)+\gamma z p'(z) \prec \phi_{SG}(z)$, where
		\[\gamma \geq
		\begin{cases}
			\beta_{0}e+\alpha+\beta^2/8\alpha, & \text{when } \beta \leq 4 \alpha \\
			\beta_{0}e-\alpha+\beta, & \text{when } \beta \geq 4 \alpha.
		\end{cases}\]
		\item $\alpha \left(z^3 p'''(z)/z p'(z)\right)+\beta \left(z p'(z)/p(z)+1\right) \prec \phi_{SG}(z)$, where
		\[
		\begin{cases}
		\beta \geq \beta_{0}+\alpha+\beta^2/8\alpha, & \text{when } \beta \leq 4 \alpha \\
		\alpha \geq	\beta_{0}, & \text{when } \beta \geq 4 \alpha.
		\end{cases}\]
	\end{enumerate}
\end{theorem}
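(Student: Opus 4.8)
The plan is to invoke the third–order admissibility criterion of Theorem~\ref{admissibility-thirdorder} with $q(z)=e^{z}$, taking as obstruction set $\Omega$ the image domain $\phi_{SG}(\mathbb{D})$, and to run the argument in exact parallel with the two theorems just proved. In each part $\psi$ is formed from the (normalized) operator so that $\psi$ and $\phi_{SG}$ agree at the origin, and the whole content beyond a routine one–variable minimization is a single geometric estimate on the modified sigmoid domain.

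For part~(i), set $\Omega=\phi_{SG}(\mathbb{D})$ and define $\psi(r,s,t,u;z)=1+\alpha u+\beta t+\gamma s$. Take an admissible point for $e^{z}$, i.e.\ $r=e^{e^{i\theta}}$, $s=m e^{i\theta}r$ with $m\ge 1$, $\operatorname{Re}(1+t/s)\ge m(1+\cos\theta)$, $\operatorname{Re}(u/s)\ge m\cos 2\theta$ and $\cos 2\theta\ge 0$. Using $|s|=m e^{\cos\theta}\ge e^{-1}$ and writing
\[
\psi-1=s\Bigl(\alpha\tfrac{u}{s}+\beta\bigl(\tfrac{t}{s}+1\bigr)+\gamma-\beta\Bigr),
\]
passing to real parts gives $|\psi-1|\ge e^{-1}\bigl(\alpha\cos 2\theta+\beta(1+\cos\theta)+\gamma-\beta\bigr)$. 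With $x=\cos\theta$ and $g(x)=\alpha(2x^{2}-1)+\beta(1+x)+\gamma-\beta$, minimize $g$ over $[-1,1]$: for $\beta\le 4\alpha$ the interior critical point $x=-\beta/4\alpha$ gives $\min g=\gamma-\alpha-\beta^{2}/8\alpha$, while for $\beta>4\alpha$ the function $g$ is increasing so $\min g=g(-1)=\gamma-\beta+\alpha$. In each case the stated inequality for $\gamma$ forces $|\psi-1|\ge\beta_{0}$, and the geometric fact that $\phi_{SG}(\mathbb{D})$ lies in the disc $\{w:|w-1|<\beta_{0}\}$ then yields $\psi\notin\Omega$; hence $\psi\in\Psi_{n}[\Omega;e^{z}]$ and Theorem~\ref{admissibility-thirdorder} delivers $p(z)\prec e^{z}$.

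Part~(ii) is handled the same way with $\psi(r,s,t,u;z)=1+\alpha(u/s)+\beta(s/r+1)$. Here $s/r=m e^{i\theta}$, so $\operatorname{Re}(s/r)=m\cos\theta$, and taking real parts of $\psi-1=\alpha(u/s)+\beta(s/r)+\beta$ gives $|\psi-1|\ge\alpha\cos 2\theta+\beta(1+\cos\theta)$ — with no factor $e^{-1}$ this time, since the quantities involved are pure ratios. Minimizing $m(x)=\alpha(2x^{2}-1)+\beta(1+x)$ over $[-1,1]$ produces $\beta-\alpha-\beta^{2}/8\alpha$ when $\beta\le 4\alpha$ and $m(-1)=\alpha$ when $\beta>4\alpha$; under the hypotheses each is $\ge\beta_{0}$, so $|\psi-1|\ge\beta_{0}$, $\psi\notin\phi_{SG}(\mathbb{D})$, and again $p(z)\prec e^{z}$.

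The main (and essentially only) obstacle is the geometric estimate feeding both parts: one must check that $\phi_{SG}(\mathbb{D})$ is contained in the open disc of radius $\beta_{0}\approx 0.475319$ about $1$, equivalently that $|w-1|\ge\beta_{0}$ implies $w\notin\phi_{SG}(\mathbb{D})$. Since $\phi_{SG}(z)-1=(e^{z}-1)/(e^{z}+1)=\tanh(z/2)$, this reduces to estimating $|\tanh(z/2)|$ on $|z|=1$, which one does by expressing $|\tanh(w)|^{2}$ through $\sinh$ and $\sin$ of the real and imaginary parts of $w=\tfrac12 e^{i\theta}$ and optimizing in $\theta$ (and, if a sharper bound than the crude containing disc is needed, by also using that the admissibility constraints restrict the argument of $\psi-1$). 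Once this constant is pinned down, the remainder of the proof is a direct transcription of the two preceding arguments, with $\beta_{0}$ playing the role that $e-1$ played for the exponential target.
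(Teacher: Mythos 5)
Your skeleton is the paper's: the same admissibility framework (Theorem~\ref{admissibility-thirdorder} with $q(z)=e^{z}$ and $\Omega=\phi_{SG}(\mathbb{D})$), the same lower bound via $|s|=me^{\cos\theta}\ge e^{-1}$ and the real-part inequalities, and the same two-case minimization of $\alpha(2x^{2}-1)+\beta(1+x)+\gamma-\beta$. The gap is exactly at the step you yourself single out as the ``main obstacle'' and defer. You conclude from $|\psi-1|\ge\beta_{0}$ that $\psi\notin\Omega$ by asserting the containment $\phi_{SG}(\mathbb{D})\subset\{w:|w-1|<\beta_{0}\}$ with $\beta_{0}\approx 0.475319$. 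That containment is false: since $\phi_{SG}(z)-1=\tanh(z/2)$, one has $\sup_{|z|<1}|\phi_{SG}(z)-1|=\tan(1/2)\approx 0.5463$, approached along the imaginary axis (for $w=\phi_{SG}(it)$ one gets $|w-1|=\tan(t/2)\to\tan(1/2)$ as $t\to 1$). So there are points of $\phi_{SG}(\mathbb{D})$ with $|w-1|>\beta_{0}$, and the implication ``$|\psi-1|\ge\beta_{0}\Rightarrow\psi\notin\phi_{SG}(\mathbb{D})$'' fails. Carrying out the verification you sketch (estimating $|\tanh(e^{i\theta}/2)|$) would reveal this, and would only salvage the argument with $\beta_{0}$ replaced by $\tan(1/2)$ throughout, i.e.\ a strictly weaker theorem than the one stated. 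Note also that $\beta_{0}$ lies strictly between the inner radius $\tanh(1/2)\approx 0.4621$ and the outer radius $\tan(1/2)\approx 0.5463$ of the discs about $1$ associated with $\phi_{SG}(\mathbb{D})$, so no disc-about-$1$ containment can reproduce the stated constant; your parenthetical suggestion of exploiting a restriction on $\arg(\psi-1)$ is not developed and does not close this.

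For comparison, the paper does not argue through a disc centered at $1$ at all. Consistently with the printed statement (which has no leading $1$ in the subordinated expression), it takes $\psi=\alpha u+\beta t+\gamma s$, respectively $\psi=\alpha(u/s)+\beta(s/r+1)$, obtains the same lower bounds for $|\psi|$ (not $|\psi-1|$), and then invokes Lemma 2.7 of \cite{MR4412430}, quoted as ``$|\log(z/(2-z))|\ge 1$ whenever $|z|\ge\beta_{0}$,'' to conclude $\psi\notin\Omega=\{w:|\log(w/(2-w))|<1\}$. Thus the constant $\beta_{0}$ enters only through that cited estimate on $|\log(w/(2-w))|$; your proposal neither uses nor reproves an estimate of that type, and the substitute geometric fact you put in its place is incorrect, so as written the proof has a genuine gap (and, even repaired along your lines, does not yield the constants in the statement).
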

\begin{proof}
	Let $\Omega=\{w \in \mathbb{C}: \left|\log\left(w/(2-w)\right)\right|<1\}$ be the domain.
	\begin{enumerate}[(i)]
		\item Define the function $\psi(r,s,t,u;z)=\alpha u+\beta t+ \gamma s$.
		Using \cite[Lemma 2.7]{MR4412430}, which says $\left|\log\left(z/(2-z)\right)\right| \geq 1$ if and only if $|z| \geq \beta_{0}$, we have
		\begin{align*}
			|\psi(r,s,t,u;z)|&=|\alpha u+\beta t+ \gamma s|\\
			&= |s|\left|\alpha \frac{u}{s}+\beta \left(\frac{t}{s}+1\right)+\gamma-\beta \right|\\
			&\geq e^{-1}\left|\alpha \frac{u}{s}+\beta \left(\frac{t}{s}+1\right)+\gamma-\beta \right|\\
			&\geq e^{-1}\left\{\alpha \operatorname{Re}\frac{u}{s}+\beta \operatorname{Re}\left(\frac{t}{s}+1\right)+\gamma -\beta\right\}\\
		    &\geq e^{-1}(\alpha \cos 2 \theta+\beta (1+\cos \theta)+\gamma-\beta).
		\end{align*}
	For $x=\cos \theta$, let us consider $g(x):=\alpha (2x^2-1)+\beta (1+x)+\gamma-\beta$.
	\begin{description}
		\item[Case 1] $\beta \leq 4 \alpha$. Observe that $g'(x) = 0$ if $x=-\beta /4 \alpha$. Since
the minimum value of the function $g$ occurs at $x=-\beta/4 \alpha$, we have
		\[|\psi(r,s,t,u;z)| \geq e^{-1}\left(\gamma-\alpha-\beta^2/8\alpha\right) \geq \beta_{0}.\]
		Hence, the required result follows from Theorem (\ref{admissibility-thirdorder}).
		
		\item[Case 2] $\beta > 4 \alpha$. Since $-\beta/(4\alpha)\leq-1$,  $g'(x) \neq 0$, it follows that
		 the function $g$ is increasing and thus,
\[|\psi(r,s,t,u;z)| \geq e^{-1}(\alpha-\beta+\gamma) \geq \beta_{0}\]
		and hence, the result follows from Theorem (\ref{admissibility-thirdorder}).
	\end{description}
	
		\item Consider the function $\psi(r,s,t,u;z)=\alpha \left(u/s\right)+\beta \left(s/r+1\right)$.
	    Using \cite[Lemma 2.7]{MR4412430}, which says $\left|\log\left(z/(2-z)\right)\right| \geq 1$ if and only if $|z| \geq \beta_{0}$, we have
		\begin{align*}
			|\psi(r,s,t,u;z)|
			&=\left|\alpha \left(\frac{u}{s}\right)+\beta \left(\frac{s}{r}+1\right)\right|\\
			&\geq \beta+\beta \operatorname{Re}\left(\frac{s}{r}\right)+\alpha\operatorname{Re}\left(\frac{u}{s}\right)\\
			%&\geq \alpha \cos 2\theta-\beta m\cos \theta\\
			&\geq \beta (1+\cos \theta)+\alpha \cos 2\theta.
		\end{align*}
        For $x=\cos \theta$, the function $m(x):=\beta (1+x)+\alpha (2x^2-1)$ will attain its minimum value in the following two cases:
	
		\begin{description}
		\item[Case 1]$\beta \leq 4 \alpha$. Note that the function $m$ attains its minimum value at $x=-\beta/4 \alpha$ and thus, we have
		\[|\psi(r,s,t,u;z)| \geq \beta-\alpha-\beta^2/8\alpha \geq \beta_0.\]
		Henceforth, the desired result follows from Theorem (\ref{admissibility-thirdorder}).
		
		\item[Case 2] $\beta > 4 \alpha$. Then $-\beta/(4\alpha)<-1$. Thus, the function $m(x)$ is increasing and
		\[|\psi(r,s,t,u;z)| \geq \alpha \geq \beta_{0}\]
		and hence, the result follows from Theorem (\ref{admissibility-thirdorder}).
	\end{description}
\end{enumerate}
\end{proof}

\begin{corollary}
	As an application, we obtain sufficient conditions for the function $f \in \mathcal{A}$ to be exponential starlike in $\mathbb{D}$ by substituting $p(z) = zf'(z)/f(z)$. 	
\end{corollary}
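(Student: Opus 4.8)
The plan is to specialize the three preceding differential-subordination implications to the choice $p(z)=zf'(z)/f(z)$, for which $f$ is exponential starlike exactly when $p(z)\prec e^{z}$. First I would note that, for $f\in\mathcal{A}$, the function $p(z)=zf'(z)/f(z)$ is analytic on $\mathbb{D}$ with $p(0)=1$; hence $p\in\mathcal{H}[1,1]$ and $p$ is an admissible input for each of the theorems above.

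The second step is the logarithmic-derivative bookkeeping that rewrites the hypotheses of those theorems — stated in terms of $zp'(z)$, $z^{2}p''(z)$ and $z^{3}p'''(z)$ — as conditions on $f$. Differentiating $\log p(z)=\log f'(z)-\log f(z)+\log z$ gives
\[
\frac{zp'(z)}{p(z)}=1+\frac{zf''(z)}{f'(z)}-\frac{zf'(z)}{f(z)},
\]
and differentiating this relation twice more, combined with $zp'(z)=p(z)\cdot\bigl(zp'(z)/p(z)\bigr)$, yields closed-form expressions for $z^{2}p''(z)$ and $z^{3}p'''(z)$ as rational combinations of $f,f',f'',f''',f''''$. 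Feeding these into the left-hand sides of parts (i) and (ii) of the theorems with data $1+Az$, $e^{z}$ and $\phi_{SG}(z)$ turns the operators $1+\alpha z^{3}p'''(z)+\beta z^{2}p''(z)+\gamma zp'(z)$ and $1+\alpha\bigl(z^{3}p'''(z)/zp'(z)\bigr)+\beta\bigl(zp'(z)/p(z)+1\bigr)$ into explicit third-order differential expressions in $f$, giving the sufficient conditions asserted in the corollary.

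Finally I would quote the relevant theorem verbatim: under the stated inequalities on $\alpha,\beta,\gamma$ (and on $A$, resp.\ on $\beta_{0}$) those theorems deliver $p(z)\prec e^{z}$, i.e.\ $zf'(z)/f(z)\prec e^{z}$, which is precisely $f\in S^{*}_{e}$. The only genuine obstacle is the derivative bookkeeping: one must arrange the higher-order chain-rule expansions so that the admissibility data $(r,s,t,u)=\bigl(p(z),zp'(z),z^{2}p''(z),z^{3}p'''(z)\bigr)$ is read off correctly, and one must verify (or impose) the side condition $|zp'(z)|\le 1$ required to apply Theorem \ref{admissibility-thirdorder} to $p=zf'/f$; beyond that the corollary is a direct transcription of the already-established implications.
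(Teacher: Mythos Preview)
Your proposal is correct and follows exactly the route the paper intends: the paper supplies no proof for this corollary at all, treating the substitution $p(z)=zf'(z)/f(z)$ as an immediate consequence of the preceding theorems, and your elaboration is precisely that substitution together with the (routine) derivative bookkeeping. Your observation that the side condition $|zp'(z)|\le 1$ from Theorem~\ref{admissibility-thirdorder} must be imposed on $p=zf'/f$ is in fact a point the paper glosses over, so your write-up is, if anything, more careful than the original.
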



\begin{thebibliography}{00}
\bibitem{ahuja18}O.~P. Ahuja, S. Kumar\ and\ V. Ravichandran, Applications of first order differential subordination for functions with positive real part, Stud. Univ. Babe\c{s}-Bolyai Math. {\bf 63} (2018), no.~3, 303--311.
\bibitem{Jain4} R. M. Ali, N. K. Jain\ and\ V. Ravichandran, Bohr radius for classes of analytic functions, Results Math. {\bf 74} (2019), no.~4, Paper No. 179, 13 pp.
		\bibitem{MR2336133} R. M. Ali, V. Ravichandran\ and\ N. Seenivasagan, Sufficient conditions for Janowski starlikeness, Int. J. Math. Math. Sci. {\bf 2007}, Art. ID 62925, 7 pp.

\bibitem{Jain1} S. Anand, N. K. Jain\ and\  S. Kumar, Normalized analytic functions with fixed second coefficient. J Anal (2023), to appear. https://doi.org/10.1007/s41478-022-00544-5.

\bibitem{Jain3} S. Anand, N.~K. Jain\ and\ S. Kumar, Sharp Bohr radius constants for certain analytic functions, Bull. Malays. Math. Sci. Soc. {\bf 44} (2021), no.~3, 1771--1785.
    		
		\bibitem{MR2795467} J. A. Antonino\ and\ S. S. Miller, Third-order differential inequalities and subordinations in the complex plane, Complex Var. Elliptic Equ. {\bf 56} (2011), no.~5, 439--454.
		
		\bibitem{MR4025222}N. Bohra, S. Kumar\ and\ V. Ravichandran, Some special differential subordinations, Hacet. J. Math. Stat. {\bf 48} (2019), no.~4, 1017--1034.

\bibitem{bohra21} N. Bohra, S. Kumar and V. Ravichandran, Applications of Briot–Bouquet differential subordination, J. Class. Anal. {\bf 18} (2021), no.~1, 17--28.		

		\bibitem{sine}N. E. Cho, V. Kumar, S. S. Kumar, and V. Ravichandran, Radius problems for starlike functions associated with the sine function, Bull. Iranian Math. Soc. {\bf 45} (2019), no.~1, 213--232.
		
		\bibitem{MR4044913} P. Goel\ and\ S. Sivaprasad Kumar, Certain class of starlike functions associated with modified sigmoid function, Bull. Malays. Math. Sci. Soc. {\bf 43} (2020), no.~1, 957--991.
		
		\bibitem{MR0267103} W. Janowski, Extremal problems for a family of functions with positive real part and for some related families, Ann. Polon. Math. {\bf 23} (1970/71), 159--177.
		
		\bibitem{MR3496681} S. Kumar\ and\ V. Ravichandran, A subclass of starlike functions associated with a rational function, Southeast Asian Bull. Math. {\bf 40} (2016), no.~2, 199--212.
		
		\bibitem{MR3800966} S. Kumar\ and\ V. Ravichandran, Subordinations for functions with positive real part, Complex Anal. Oper. Theory {\bf 12} (2018), no.~5, 1179--1191.
		
		\bibitem{MR4017390} V. Kumar, N. E. Cho, V. Ravichandran and H. M. Srivastava, Sharp coefficient bounds for starlike functions associated with the Bell numbers, Math. Slovaca {\bf 69} (2019), no.~5, 1053--1064.
		
		\bibitem{MR2338668}R. K\"{u}stner, On the order of starlikeness of the shifted Gauss hypergeometric function, J. Math. Anal. Appl. {\bf 334} (2007), no.~2, 1363--1385.
		
		\bibitem{MR1343506} W. C. Ma\ and\ D. Minda, A unified treatment of some special classes of univalent functions, in {\it Proceedings of the Conference on Complex Analysis (Tianjin, 1992)}, 157--169, Conf. Proc. Lecture Notes Anal., I, Int. Press, Cambridge, MA.
		
		\bibitem{MR3394060}R. Mendiratta, S. Nagpal\ and\ V. Ravichandran, On a subclass of strongly starlike functions associated with exponential function, Bull. Malays. Math. Sci. Soc. {\bf 38} (2015), no.~1, 365--386.
		
		\bibitem{MR4447482}S. Mushtaq, M. Raza\ and\ J. Sok\'{o}{\l}, Differential subordination related with exponential functions, Quaest. Math. {\bf 45} (2022), no.~6, 889--899.
		
		\bibitem{MR4449282}J. Palani, V.~A.~S.~J. Lavanya\ and\ A. Farzana~Habibullah, Third order differential subordination associated with Janowski functions, Math. Appl. (Brno) {\bf 11} (2022), no.~1, 45--55.
		
		\bibitem{MR3962536}A. Naz, S. Nagpal\ and\ V. Ravichandran, Star-likeness associated with the exponential function, Turkish J. Math. {\bf 43} (2019), no.~3, 1353--1371.
		
		\bibitem{MR0975653}M. Nunokawa, M. Obradovi\'{c}\ and\ S. Owa, One criterion for univalency, Proc. Amer. Math. Soc. {\bf 106} (1989), no.~4, 1035--1037.
		
		\bibitem{MR0506307}S.~S. Miller\ and\ P.~T. Mocanu, Second-order differential inequalities in the complex plane. J. Math. Anal. Appl. 65 (1978), no. 2, 289--305.
		
		\bibitem{MR1760285} S.~S. Miller\ and\ P.~T. Mocanu, {\it Differential subordinations}, Monographs and Textbooks in Pure and Applied Mathematics, 225, Marcel Dekker, Inc., New York, 2000.
		
		\bibitem{MR1232447}S. Ponnusamy\ and\ O.~P. Juneja, Third-order differential inequalities in the complex plane, in {\it Current topics in analytic function theory}, 274--290, World Sci. Publ., River Edge, NJ.
		
		\bibitem{MR3469339} R. K. Raina\ and\ J. Sok\'{o}\l, On coefficient estimates for a certain class of starlike functions, Hacet. J. Math. Stat. {\bf 44} (2015), no.~6, 1427--1433.

\bibitem{jain2}S. Rana, O. P. Ahuja, N. K. Jain, Radii Constants  for Functions with Fixed Second Coefficient, Mathematics {\bf 10} (2022), no.~23, 4428, 26 pp.
		
		\bibitem{MR3536076}K. Sharma, N.~K. Jain\ and\ V. Ravichandran, Starlike functions associated with a cardioid, Afr. Mat. {\bf 27} (2016), no.~5-6, 923--939.
		
		\bibitem{meghna-subordination}M. Sharma, S. Kumar, and N. K. Jain. Differential Subordination implications for Certain Carath\'{e} odory functions, Studia Universitatis Babeş-Bolyai Mathematica, accepted (2021).
		
		\bibitem{MR4412430} M. Sharma, S. Kumar\ and\ N. K. Jain, Differential subordinations for functions with positive real part using admissibility conditions, Asian-Eur. J. Math. {\bf 15} (2022), no.~4, Paper No. 2250066, 22 pp.
		
		\bibitem{lemniscate}J. Sok\'{o}\l, and J. Stankiewicz. "Radius of convexity of some subclasses of strongly starlike functions." Zeszyty Nauk. Politech. Rzeszowskiej Mat 19 (1996): 101-105.
		
		\bibitem{MR3804281}H.~M. Srivastava, A. Prajapati\ and\ P. Gochhayat, Third-order differential subordination and differential superordination results for analytic functions involving the Srivastava-Attiya operator, Appl. Math. Inf. Sci. {\bf 12} (2018), no.~3, 469--481.
		
		\bibitem{MR4420664}A. Swaminathan\ and\ L.~A. Wani, Sufficiency for nephroid starlikeness using hypergeometric functions, Math. Methods Appl. Sci. {\bf 45} (2022), no.~9, 5388--5401.
		
		\bibitem{MR4354938}K. Ullah\ et al., A study of sharp coefficient bounds for a new subfamily of starlike functions, J. Inequal. Appl. {\bf 2021}, Paper No. 194, 20 pp.
		
		\bibitem{MR4190740}L.~A. Wani\ and\ A. Swaminathan, Starlike and convex functions associated with a nephroid domain, Bull. Malays. Math. Sci. Soc. {\bf 44} (2021), no.~1, 79--104.
		
\end{thebibliography}
\end{document}